\newtheorem{Proposition}{Proposition}[section]
\newtheorem{Lemme}{Lemma}[section]
\newtheorem{Theoreme}{Theorem}[section]
\newtheorem{Remarque}{Remark}
\newcommand{\sign}{\text{sign}}
\def \tu{\tilde{u}}
\def \bu{{\bf u}}
\def \tv{\tilde{v}}
\def \bK{{\bf K}}
\def \R{\mathbb{R}}
\def \ds{\displaystyle} 
\title[\bf Nonlocal nonlinear model] %
{Remarks on the Spatial Asymptotic Behavior of Solutions to a 1D Model of Equatorial Oceanic Flows} 
\author[ Manuel Fernando Cortez  and Oscar Jarr\'in]{}
\subjclass[2020]{Primary: 35B40; Secondary: 35B30, 35A01} 
\keywords{ Models of Equatorial Oceanic Flows; Coriolis effect; Hilbert transform; Spatial asymptotics of solutions; Blow-up criterion}  
\email{oscar.jarrin@udla.edu.ec}
\email{manuel.cortez@epn.edu.ec} 
\thanks{$^*$Corresponding author:  Oscar Jarr\'in}
\begin{document}
	\maketitle
	\begin{center}
	    \begin{minipage}{5cm}
 	\centerline{\scshape Manuel Fernando Cortez}
	\medskip
	{\footnotesize
		\centerline{Departamento de Matem\'aticas}
		\centerline{Escuela Politécnica Nacional} 
		\centerline{Ladr\'on de Guevera E11-253, Quito, Ecuador} 
	}
        \end{minipage}\hspace{2cm}
        \begin{minipage}{5cm}
           	\centerline{\scshape Oscar Jarr\'in$^*$}
	\medskip
	{\footnotesize
		\centerline{Escuela de Ciencias Físicas y Matemáticas}
		\centerline{Universidad de Las Américas}
		\centerline{V\'ia a Nay\'on, C.P.170124, Quito, Ecuador}
	} 
        \end{minipage}
	\end{center}
	
\bigskip
\begin{abstract} We consider a new nonlocal and nonlinear one-dimensional evolution model arising in the study of oceanic flows in equatorial regions, recently derived in [A. Constantin and L. Molinet, Global Existence and Finite-Time Blow-Up for a Nonlinear Nonlocal Evolution Equation, Commun. Math. Phys. 402 (2023), 3233–3252].
	
We investigate the spatial asymptotic behavior of its solutions. In particular, we observe the influence of the Coriolis effect, which, even for rapidly decaying initial data, yields solutions that decay at the rate $1/|x|$.
Thereafter, we shed light on the optimality of this decay rate.
\end{abstract}

\section{Introduction}
\subsection*{Introduction to the model}  Very recently, in \cite{Constantin-Molinet}, P. Constantin and L. Molinet derived a new nonlocal and nonlinear one-dimensional evolution model arising in the study of \emph{oceanic flows in equatorial regions}. In what follows, we provide a brief outline of the derivation of this model. For a more detailed exposition, see Section 2 of \cite{Constantin-Molinet}. 

\medskip

Within the framework of the physical coordinates -  where the $x$-axis points horizontally due east, the $y$-axis points horizontally due north, and  the $z$-axis pointing vertically - the starting point are the Navier–Stokes system governing equatorial ocean dynamics on the $f$-plane, \emph{i.e.},  where the Coriolis parameter is approximated as a constant:
\begin{equation*}
\begin{cases}\vspace{2mm}
\partial_t u + u\, \partial_x u + v\, \partial_y u + w\, \partial_z u - \beta w =-\frac{1}{\rho}\partial_x P+\mu_1 (\partial^2_x u + \partial^2_y u)+ \mu_2 \partial^2_z u, \\ \vspace{2mm}
\partial_t v + u\, \partial_x v + v\, \partial_y v + w\, \partial_z v = -\frac{1}{\rho}\partial_y P+\mu_1 (\partial^2_x u + \partial^2_y u)+ \mu_2 \partial^2_z u, \\ \vspace{2mm}
\partial_t w + u\, \partial_x w + v\, \partial_y w + w\, \partial_z w + \beta w =-\frac{1}{\rho}\partial_w P-g+\mu_1 (\partial^2_x w + \partial^2_y w)+ \mu_2 \partial^2_z w, \\
\partial_x u + \partial_y v + \partial_z w=0.
\end{cases}
\end{equation*}
Here,  $u=u(t,x,y,z)$, $v=v(t,x,y,z)$ and $w=w(t,x,y,z)$ denote the components of the fluid velocity in the directions of azimuth, latitude, and elevation, respectively, and $P=P(t,x,y,z)$ is the pressure. Additionally, $\rho>0$ is the constant water density, $g>0$ is the Earth's gravitational acceleration at the surface, $\mu_1, \mu_2>0$ are the horizontal and vertical viscosity coefficients,  and $\beta \neq 0$ is a parameter characterizing the Coriolis effect.  Finally, the equation $\partial_x u + \partial_y v + \partial_z w=0$ describes the mass conservation. 

\medspace

Exploiting the confinement of \emph{equatorial flows}, the meridional velocity component $v$,  as well as the  $y$-axis can be neglected, yielding the system for $u=u(t,x,z)$ and $w=w(t,x,z)$:
 \begin{equation*}
 \begin{cases}\vspace{2mm}
 \partial_t u + u\, \partial_x u  + w\, \partial_z u - \beta w =-\frac{1}{\rho}\partial_x P+\mu_1 \partial^2_x u + \mu_2 \partial^2_z u, \\ \vspace{2mm}
 \partial_t w + u\, \partial_x w  + w\, \partial_z w + \beta w =-\frac{1}{\rho}\partial_w P-g+\mu_1 \partial^2_x w+ \mu_2 \partial^2_z w, \\
 \partial_x u  + \partial_z w=0.
 \end{cases}
 \end{equation*}
 
Thereafter, by restricting the motion to a fixed depth $z=z_0$ and using the incompressibility constraint, a harmonic stream function is introduced to rigorously deduce that the vertical velocity $w$ is related to the horizontal velocity $u$ by 
\[ w(t,x,z_0)= - \mathcal{H}u(t,x,z_0),\]
where $\mathcal{H}$ denotes the Hilbert transform (for an explicit definition, see  expression (\ref{Hilbert}) below).  Additionally, it is deduced that the horizontal pressure gradient acts as an external forcing term
\[ - \frac{1}{\rho} \partial_x P(t,x,z_0):= f(t,x). \]
 Finally, by setting a unified viscosity constant 
\[ \mu:= \mu_1- \mu_2>0, \]
the authors of \cite{Constantin-Molinet} show that the previous system reduces to the following one-dimensional nonlocal nonlinear model:
\begin{equation*}
  \partial_t u + u\, \partial_x u +  (\mathcal{H}u)\, \partial_x (\mathcal{H}u) + \beta \,\mathcal{H}u - \mu\, \partial^2_x u = f, \qquad \mu>0, \quad \beta \neq 0, 
\end{equation*}
which describes the evolution of the \emph{horizontal velocity}  $u(t,x)$ in the context of \emph{equatorial oceanic flows}. 

\subsection*{Previous theoretical results}  The initial value problem associated with this model is also studied in \cite{Constantin-Molinet} under the periodic spatial condition $x \in \mathbb{T}:= \R \setminus \mathbb{Z}$.  Specifically, for $s\in \R$, we denote by $H^s_0(\mathbb{T})$ the closed subspace of zero-mean functions in the Sobolev space $H^s(\mathbb{T})$. It is then  shown that the model above is locally well-posed in the space $H^s_0(\mathbb{T})$ for $s>-\frac{1}{2}$, provided that the initial datum and  the external source term  satisfy 
\[ u_0 \in H^s_0(\mathbb{T}), \qquad f \in L^\infty\big([0, +\infty[, H^{s'}_{0}(\mathbb{T})\big), \quad  s'\geq s.\]
Moreover, taking $f =0$  and using similar arguments as in  \cite{Bekiranov,Dix}, it follows that $H^{-\frac{1}{2}}_0(\mathbb{T})$ is the critical Sobolev space for the well-posedness of this model. 

\medskip

As noticed in \cite{Constantin-Molinet}, all suitable assumptions can always be imposed on the external source term $f$. Consequently, with only a minor loss of generality, one may assume 
\[ f=0,\]
 in order to carry out a qualitative study of the properties of this model, which are mainly governed by its linear and nonlinear terms. One such property concerns the global-in-time existence of solutions.

\medskip

In full generality, smooth solutions formally satisfy the following energy estimate:
\begin{equation*}
 \frac{d}{dt}\| u(t,\cdot)\|^2_{L^2} \lesssim - \int_{\mathbb{T}} (\mathcal{H}u)\, \partial_x (\mathcal{H}u)\, u \, dx-\| u(t,\cdot)\|^2_{L^2}, 
\end{equation*}
where the nonlinear effects of the nonlocal term  $(\mathcal{H}u) \, \partial_x (\mathcal{H}u)$ ultimately prevent the derivation of suitable energy estimates that would yield global-in-time existence of solutions through standard Grönwall-type arguments.

\medskip

In this context, for $s \geq 0$,  it is shown in \cite{Constantin-Molinet}  that small  initial data $u_0 \in H^s_0(\mathbb{T})$ satisying $\| u_0 \|_{L^2} \leq \frac{\mu}{2}$, yield global-in-time  solutions $u \in \mathcal{C}([0,+\infty[, H^s_0(\mathbb{T}))$ that satisfy the uniform bound $\| u(t,\cdot)\|_{L^2} \leq \frac{\mu}{4}$, for all $t>0$.

\medskip

The long-time dynamics of solutions arising from large initial data, \emph{i.e.}, when $\| u_0 \|_{L^2}> \frac{\mu}{2}$, appear to be considerably more complex. In fact,  by exploiting the Fourier mode decomposition, 
the last sections of \cite{Constantin-Molinet} show, on the one hand, that certain large initial datum supported on a finite number of Fourier modes yield a global-in-time solution $u(t,x)$ to the model.  On the other hand,  this same  type of initial data can lead to blow-up in finite time of the corresponding solution.

\subsection*{New theoretical results} The main objective of this note is to continue the qualitative study of the model introduced above. Specifically, we investigate the spatial asymptotic behavior of its solutions, assuming from now on that the spatial variable $x$  belongs to the entire real line $\R$. 

\medskip

The spatial asymptotics of solutions to models in fluid motion are not only of mathematical interest but also of physical relevance. Intuitively, as mentioned in \cite{Brandolese-Vigneron}, if at the beginning of the evolution problem the fluid is at rest outside a bounded region (\emph{i.e}., the initial data are compactly supported), we would like to determine the rate at which the fluid particles move away from the initial region once the motion begins. 

\medskip

Mathematically, such questions are addressed by studying the \emph{pointwise behavior} of solutions to these models as  $|x|\to +\infty$. For instance, in the case of the \emph{three-dimensional Navier–Stokes} equations on the whole space, it was shown in \cite{Dobrokhotov} that well-prepared initial data lead to an instantaneous spreading of the corresponding solutions, which cannot decay at infinity faster than the rate  $1/|x|^4$. See also \cite{Bjorland,Brandolese-Meyer, Brandolese-Vigneron, Brandolese2} for further references on the spatial pointwise behavior of the Navier–Stokes equations

\medskip

Concerning some one-dimensional models, this question has been studied in \cite{Cortez-Jarrin-1, Cortez-Jarrin-2} for certain \emph{dissipative modifications} of the \emph{Korteweg–De Vries}  and \emph{Benjamin–Ono} equations, which arise in the study of viscous stratified fluids. In these cases, it was shown that well-prepared initial data lead to solutions with an optimal pointwise decay rate of $1/|x|^2$  as $|x|\to +\infty$. Recently, in \cite{Cortez-Jarrin3}, this study is done for a \emph{generalized dispersive-dissipative Kuramoto-type equation}, leading pointwise decaying rates of solutions which are essentially governing by the fractional power of the Laplacian operator appearing in this equation.

\medskip

Motivated by the aforementioned studies, we now turn our attention to the initial value problem associated with the nonlocal nonlinear model introduced above
\begin{equation}\label{Main-Equation}
\begin{cases}\vspace{2mm}
\partial_t u + u\, \partial_x u +  (\mathcal{H}u)\, \partial_x (\mathcal{H}u) + \beta \,\mathcal{H}u - \mu\, \partial^2_x u = 0, \qquad \beta \neq 0, \quad \mu >0,\\
u(0,\cdot)=u_0,
\end{cases}
\end{equation}
where, for a time $T>0$, the function $u:[0,T]\times \R \to \R$ denotes the solution,  and $u_0: \R \to \R$ an initial datum.  Additionally, the operator $\mathcal{H}$ is the Hilbert's transform, which can be defined in the Fourier level by the symbol 
\begin{equation}\label{Hilbert}
\widehat{\mathcal{H}\varphi}(\xi)=-i\,\sign(\xi)\widehat{\varphi}(\xi), \qquad 
\sign(\xi)=\,\begin{cases}
-1, \quad \xi <0, \\
0, \quad \xi=0, \\
1, \quad \xi>0,
\end{cases} 
\qquad \varphi \in \mathcal{S}(\R). 
\end{equation}

Note that, with a minor loss of generality, the external source term is taken to be zero. Here, we aim to understand how the linear and nonlinear terms in this equation govern the asymptotic spatial behavior of the solutions. 

\medskip

We will consider the following hypothesis on the initial data:
\begin{equation}\label{Def-Data-0}
u_0 \in H^s(\R), \qquad s > \frac{3}{2}.
\end{equation}
This constraint on the regularity parameter $s$ is essentially technical, introduced to handle the nonlinear terms $u\,\partial_x u$ and  $(\mathcal{H}u)\, \partial_x (\mathcal{H}u)$ in equation (\ref{Main-Equation}).

\medskip

Furthermore,  for a constant $C_0>0$ and a fixed parameter $\gamma>0$, we assume the prescribed pointwise  decaying rate 
	\begin{equation}\label{Def-Data}
|u_0(x)|\leq \frac{C_0}{(1+|x|)^{1+\gamma}}, \qquad \text{for any}\quad  x \in \R.
\end{equation}
Here, the parameter  $\gamma$ essentially measures the decay rate assumed for the initial data. Nevertheless, due to certain technical constraints, a faster decay rate, given by the exponent $1+\gamma$, 
is required.

\medskip

Then, our main result is stated as follows:
\begin{Theoreme}\label{Th-Main} Assume  (\ref{Def-Data-0}) and (\ref{Def-Data}).  There exists a time $T_0=T_0( u_0)>0$ and a unique solution $u \in \mathcal{C}\big([0,T_0], H^s(\R) \big)$ of equation (\ref{Main-Equation}). Additionally, for any time $0<t\leq T_0$ and $x\in \R$, the arising solution $u(t,x)$ of equation (\ref{Main-Equation})  satisfies the pointwise estimates:
	\begin{equation}\label{Decay-Solution}
	|u(t,x)|+|\mathcal{H}u(t,x)| \leq \frac{C_1}{t^{\frac{1}{2}}(1+|x|)^{\min(1,\gamma)}},    
	\end{equation}
where the constant $C_1 = C_1(\beta, \gamma, \mu, T_0, C_0, u) > 0$ is independent of the time variable $t$ and the spatial variable $x$.
\end{Theoreme}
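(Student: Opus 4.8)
The existence and uniqueness part is a routine parabolic fixed‑point argument, which I run in the mild (Duhamel) formulation
\begin{equation*}
u(t) = e^{\mu t \partial_x^2} u_0 - \tfrac12\int_0^t e^{\mu(t-\tau)\partial_x^2}\,\partial_x\!\big(u^2 + (\mathcal{H}u)^2\big)(\tau)\,d\tau \;-\; \beta\int_0^t e^{\mu(t-\tau)\partial_x^2}\,\mathcal{H}u(\tau)\,d\tau,
\end{equation*}
having used $u\,\partial_x u + (\mathcal{H}u)\,\partial_x(\mathcal{H}u) = \tfrac12\partial_x\big(u^2+(\mathcal{H}u)^2\big)$ and writing $e^{\mu t\partial_x^2}$ for the heat semigroup. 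Since $s>\tfrac32$, the space $H^s(\R)$ is a Banach algebra, $\mathcal{H}$ is bounded on $H^s(\R)$ and $H^{s-1}(\R)$, and $H^{s-1}(\R)\hookrightarrow L^\infty(\R)$; together with the smoothing bound $\|\partial_x e^{\mu t\partial_x^2}\|_{H^s\to H^s}\lesssim t^{-1/2}$ this makes the right‑hand side a contraction on a ball of $\mathcal{C}([0,T_0],H^s(\R))$ provided $T_0=T_0(\|u_0\|_{H^s})$ is small, giving the unique solution $u\in\mathcal{C}([0,T_0],H^s(\R))$; I set $M:=\sup_{0\le t\le T_0}\|u(t)\|_{H^s}<\infty$, noting in particular that $\|u(t)\|_{L^\infty}+\|\mathcal{H}u(t)\|_{L^\infty}\lesssim M$ and $\|\partial_x u(t)\|_{L^\infty}\lesssim M$ on $[0,T_0]$.

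For the pointwise bound \eqref{Decay-Solution} I introduce the weight $\omega(x):=(1+|x|)^{\min(1,\gamma)}$ and the quantity $N(t):=\|\omega\,u(t)\|_{L^\infty}+\|\omega\,\mathcal{H}u(t)\|_{L^\infty}$, working with the truncated weights $\omega_n:=\min(\omega,n)$ so that $N_n(t)\lesssim nM<\infty$ a priori; all estimates below will be uniform in $n$, after which I let $n\to\infty$. The argument rests on three facts. \emph{(i)} Convolution against the heat kernel preserves polynomial decay: $\|\,(1+|\cdot|)^a\,e^{\mu t\partial_x^2}g\,\|_{L^\infty}\lesssim\|\,(1+|\cdot|)^a g\,\|_{L^\infty}$ for every $a>0$, uniformly in $t\in(0,T_0]$ (proved by the splitting $|y|<|x|/2$ versus $|y|\ge|x|/2$). \emph{(ii)} $\|\partial_x e^{\mu t\partial_x^2}\|_{L^1\to L^1}\lesssim t^{-1/2}$, and the same holds for the kernel of $|D|e^{\mu t\partial_x^2}=\mathcal{H}\partial_x e^{\mu t\partial_x^2}$, whose spatial kernel moreover decays like $1/x^2$. \emph{(iii)} If $\varphi\in H^s(\R)$ satisfies $|\varphi(x)|\lesssim(1+|x|)^{-(1+\gamma)}$, then $|\mathcal{H}\varphi(x)|\lesssim(1+|x|)^{-\min(1,\gamma)}$: for $\gamma\ge1$ this follows from $\varphi\in L^1(\R)$ (because $1+\gamma>1$) and the expansion $\mathcal{H}\varphi(x)=\tfrac1{\pi x}\int_{\R}\varphi+O(|x|^{-2})$, while for $0<\gamma<1$ the Hilbert transform essentially preserves the $|x|^{-\gamma}$ decay — this is exactly the mechanism capping the decay at the rate $1/|x|$, and it identifies the Coriolis term $\beta\mathcal{H}u$ as the source of the slow $1/|x|$ tail, which is transferred to $u$ itself through the Duhamel integral $\beta\int_0^t e^{\mu(t-\tau)\partial_x^2}\mathcal{H}u(\tau)\,d\tau$.

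Applying $\omega(x)|\cdot|$ to the Duhamel identity for $u$, and to the identity obtained for $\mathcal{H}u$ by using $\mathcal{H}\partial_x=|D|$ and $\mathcal{H}^2=-I$, I bound the linear terms $e^{\mu t\partial_x^2}u_0$ and $e^{\mu t\partial_x^2}\mathcal{H}u_0$ by $C(C_0+\|u_0\|_{H^s})$ uniformly in $t$ via \emph{(i)} and \emph{(iii)}; in the nonlinear terms I estimate one factor of $u$ (resp. $\mathcal{H}u$) in $L^\infty$ by $M$ and keep the other as $N(\tau)\,\omega^{-1}$, so that convolving against $\partial_x e^{\mu(t-\tau)\partial_x^2}$ (resp. against the kernel of $|D|e^{\mu(t-\tau)\partial_x^2}$) and invoking \emph{(ii)} shows these terms contribute $\lesssim M\int_0^t(t-\tau)^{-1/2}N(\tau)\,d\tau$; the Coriolis terms contribute $\lesssim|\beta|\int_0^t N(\tau)\,d\tau$ by \emph{(i)}. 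This yields the weakly singular Volterra inequality
\begin{equation*}
N(t)\ \le\ C\big(C_0+\|u_0\|_{H^s}\big)\;+\;C\int_0^t\!\Big(\frac{M}{(t-\tau)^{1/2}}+|\beta|\Big)N(\tau)\,d\tau,\qquad 0<t\le T_0 .
\end{equation*}
To close it I pass to the time‑weighted quantity $\mathcal{N}(t):=\sup_{0<\tau\le t}\tau^{1/2}N(\tau)$: multiplying by $t^{1/2}$ and using $\int_0^t(t-\tau)^{-1/2}\tau^{-1/2}\,d\tau=\pi$ and $\int_0^t\tau^{-1/2}\,d\tau=2t^{1/2}$ gives $\mathcal{N}(T_0)\le C T_0^{1/2}(C_0+\|u_0\|_{H^s})+C\big(\pi M\,T_0^{1/2}+2|\beta|\,T_0\big)\mathcal{N}(T_0)$; after a further shrinking of $T_0$ (depending on $M$ and $\beta$) the last term is absorbed — here the truncation $\omega_n$ guarantees $\mathcal{N}_n(T_0)<\infty$, legitimizing this absorption uniformly in $n$ — and letting $n\to\infty$ yields $\mathcal{N}(T_0)\le C_1$, which is precisely \eqref{Decay-Solution} with $C_1\simeq\mathcal{N}(T_0)=C_1(\beta,\gamma,\mu,T_0,C_0,u)$ (the $u$‑dependence entering through $M$).

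I expect the main obstacle to be fact \emph{(iii)} and, more generally, the control of the nonlocal term $\mathcal{H}u$ in the weighted norm: in the borderline regime $\gamma\ge1$ the relevant functions decay only like $1/|x|$ and fail to be integrable, so naive Hilbert‑transform estimates lose a logarithm; this is circumvented by keeping the divergence form $\partial_x\big(u^2+(\mathcal{H}u)^2\big)$, whose vanishing mean improves the decay of $|D|\big(u^2+(\mathcal{H}u)^2\big)$ to $1/|x|^2$ so that the weight $\omega$ is genuinely respected. The remaining technical care goes into the weighted convolution estimates \emph{(i)}–\emph{(ii)}, which must be verified uniformly in the truncation parameter $n$ and in the time variable, but this is routine once the $|y|<|x|/2$ versus $|y|\ge|x|/2$ decomposition is in place.
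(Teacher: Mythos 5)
Your proposal reaches the stated conclusion by a genuinely different route from the paper, and in outline it is sound. The paper absorbs the Coriolis term into the linear propagator: it works with the kernel $K(t,\cdot)$ of $\partial_t+\beta\mathcal{H}-\mu\partial_x^2$, proves $|K(t,x)|+|\mathcal{H}K(t,x)|\lesssim \eta(t)t^{-1/2}(1+|x|)^{-1}$ by integrating by parts in the Fourier integral (the boundary terms at $\xi=0^{\pm}$ produce the $\sin(\beta t)/x$ tail, see \eqref{Profile-Kernel}), and then runs the fixed point for the coupled system in $(\tilde u,\tilde v)$, with $\tilde v$ standing for $\mathcal{H}\tilde u$, directly in the weighted space $E_T$ — precisely so that the Hilbert transform is only ever applied to the explicit kernel, never to a merely decaying function, followed by a separate $\mathcal{C}_tH^s$ uniqueness step to identify $\tilde v=\mathcal{H}u$. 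You instead keep the pure heat semigroup, put $\beta\mathcal{H}u$ into the Duhamel integral, and bootstrap the weighted bound a posteriori through a singular Volterra inequality with truncated weights. This is leaner (no auxiliary system, no identification step), your facts \emph{(i)}--\emph{(ii)} are correct (including the $1/x^2$ decay and $O(t^{-1/2})$ $L^1$-norm of the kernel of $|D|e^{\mu t\partial_x^2}$), and the absorption via $\mathcal{N}(t)=\sup_\tau\tau^{1/2}N(\tau)$ closes. What you give up is the explicit leading coefficient $-\sin(\beta t)M(u_0)/x$ of the kernel, which the paper reuses to prove Proposition \ref{Prop-Main-1}.

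The one step you must repair is fact \emph{(iii)}, which is exactly the estimate the authors designed their whole proof to avoid. As justified in your text it is wrong: the expansion $\mathcal{H}\varphi(x)=\tfrac{1}{\pi x}\int\varphi+O(|x|^{-2})$ does not hold under \eqref{Def-Data-0}--\eqref{Def-Data} (the region $|y|>|x|/2$ contributes only $O(|x|^{-\gamma})$, and the principal-value part near $y=x$ is controlled by $\sup_{|w-x|\le 1}|u_0'(w)|$, whose decay rate is not prescribed), and for $0<\gamma<1$ "essentially preserves the decay" is an assertion, not a proof. The \emph{conclusion} of \emph{(iii)} is nevertheless true with the hypotheses at hand, by interpolating the two available bounds on the local part: for $|x|\ge 2$ and $0<z\le 1$ one has $|u_0(x-z)-u_0(x+z)|\le\min\big(2C_0(1+|x|/2)^{-(1+\gamma)},\,2z\|u_0'\|_{L^\infty}\big)$ with $\|u_0'\|_{L^\infty}\lesssim\|u_0\|_{H^s}$, so that $\int_0^1 z^{-1}|u_0(x-z)-u_0(x+z)|\,dz\lesssim (1+|x|)^{-(1+\gamma)}\log(2+|x|)=o(|x|^{-1})$, while the far part is handled by the same $|y|\le|x|/2$ versus $|y|>|x|/2$ splitting (plus H\"older) that the paper uses for $\mathcal{H}K(t,\cdot)\ast u_0$. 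With that lemma properly proved, your argument goes through; without it, the linear term $e^{\mu t\partial_x^2}\mathcal{H}u_0$ in your Duhamel formula for $\mathcal{H}u$ is uncontrolled in the weighted norm. A smoother fix, closer in spirit to the paper, is to write $e^{\mu t\partial_x^2}\mathcal{H}u_0=(\mathcal{H}G_t)\ast u_0$ and estimate the conjugate heat kernel $\mathcal{H}G_t$ (which decays like $1/|x|$ with $\|\mathcal{H}G_t\|_{L^\infty}\lesssim t^{-1/2}$) exactly as in \eqref{Estim-Ker-1}. Finally, note that your contraction yields uniqueness only in a ball of $\mathcal{C}([0,T_0],H^s)$; uniqueness in the full space, as claimed in the theorem, still requires a Gr\"onwall-type energy argument as in the paper's Proposition \ref{Prop-Uniqueness}.
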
 

 Estimate (\ref{Decay-Solution}) exhibits a pointwise spatial decay rate for the solution $u(t,x)$, representing the horizontal velocity, and for its Hilbert transform $\mathcal{H}u(t,x)$,  representing the vertical velocity in the context of the equatorial oceanic flows introduced above.

\medskip

When comparing estimates (\ref{Def-Data}) and (\ref{Decay-Solution}), it is interesting to observe an instantaneous loss of persistence with respect to the prescribed decay rate assumed for the initial data. Specifically, as $|x|\to +\infty$ we have 
\begin{equation*}
|u_0(x)|\lesssim \frac{1}{|x|^{1+\gamma}}    \quad  \text{at}\quad t=0, \quad \text{and}\quad  |u(t,x)| \lesssim 
\begin{cases}\vspace{3mm}
 \ds{\frac{1}{|x|^\gamma}}, &   0<\gamma<1, \\
 \ds{\frac{1}{|x|}}, &\gamma>1,
\end{cases}  \qquad \text{for}\quad  0<t\leq T_0.
\end{equation*}
We thus conclude that for small values of $\gamma$ (that is, when $0<\gamma<1$) the decay rate of the initial data still influences the spatial asymptotics of the solution, whereas for large values of $\gamma$ (when $\gamma>1$),  the solution decays at infinity like  $1/|x|$. 

\medskip

In order to briefly explain this phenomenon, we mention that, roughly speaking, estimate (\ref{Decay-Solution}) is obtained from the mild formulation of the solution $u(t,x)$, which involves a convolution kernel $K(t,x)$. This kernel,  explicitly given in expression (\ref{Kernel}) below,
depends strongly on   the Coriolis parameter $\beta$.  In the physically relevant case $\beta \neq 0$, we show that $\ds{K(t,x) \sim 1/|x|}$ for any $t>0$ and  sufficiently large $|x|$, which leads to the observed decay rate of the solutions when $\gamma >1$. 

\medskip

It is therefore interesting to observe the influence of the Coriolis parameter $\beta$ on the spatial asymptotics of solutions to equation (\ref{Main-Equation}). In the case $\beta = 0$, expression (\ref{Kernel}) shows that $K(t,x)$ is the well-known heat kernel, which decays rapidly in the spatial variable. Nevertheless, this case is not physically relevant in the context of this  equation.  

\medskip

Assuming fast-decaying data, which satisfy (\ref{Def-Data}) for $\gamma > 1$, in our next result we shed some light on the optimality of the decay rate $1/|x|$ of solutions.

\begin{Proposition}\label{Prop-Main-1} Under the same hypotheses (\ref{Def-Data-0}) and (\ref{Def-Data}),  assume that $\gamma>1$ and that the initial datum also satisfies
	\begin{equation}\label{Mass-Initial-Datum}
	\int_{\R} u_0(x)\,dx := M(u_0) \neq 0.
	\end{equation}
	Let $u(t,x)$ be the associated solution of equation (\ref{Main-Equation}) obtained in Theorem \ref{Th-Main}. Then, \emph{one of the following statements does not hold}:
	\begin{itemize}
		\item  There exist $\varepsilon > 0$ and $M_\varepsilon > 0$ such that, for any  $0<t\leq T_0$ and any $|x|>M_\varepsilon$, the solution satisfies the pointwise decay estimate:
		\begin{equation}\label{Decay-Solution-Fast}
		|u(t,x)| + |\mathcal{H}u(t,x)| \leq \frac{C_2}{t^{\frac{1}{2}}\,|x|^{1+\varepsilon}}, 
		\end{equation}
		for some generic constant $C_2 > 0$, independent of $t$ and $x$, but possibly depending on the parameters $\beta, \gamma, \varepsilon, \mu, T_0, C_0, M(u_0)$, as well as on certain norms of the solution $u$.  
		
		\medskip
		
		\item The solution satisfies the mean decay property
		\begin{equation}\label{Decay-Solution-Mean-Fast}
		u, \mathcal{H}u \in \mathcal{C}\Big([0,T_0], L^2\big(\R, |x|\,dx\big)\Big).
		\end{equation}
	\end{itemize}
\end{Proposition}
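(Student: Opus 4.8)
The plan is to argue by contradiction, and I expect that only the pointwise bound (\ref{Decay-Solution-Fast}) will actually be used, so the argument in fact yields the stronger statement that (\ref{Decay-Solution-Fast}) alone is impossible; the underlying reason is that the solution $u(t,x)$ of Theorem \ref{Th-Main} decays at spatial infinity \emph{exactly} like $1/|x|$, and no faster, once $\beta\neq0$ and $M(u_0)\neq0$. To see this I would start from the mild formulation. Writing the two quadratic terms of (\ref{Main-Equation}) in divergence form, so that the equation reads $\partial_t u=\mu\,\partial_x^2 u-\beta\,\mathcal{H}u-\tfrac12\,\partial_x\big(u^2+(\mathcal{H}u)^2\big)$, and recalling that the kernel $K$ of (\ref{Kernel}) is the propagator of the linear part $\partial_t-\mu\,\partial_x^2+\beta\,\mathcal{H}$, hence has Fourier symbol $\widehat{K(t)}(\xi)=e^{-\mu t\xi^2}e^{\,i\beta t\,\sign(\xi)}$, I would take the Fourier transform of Duhamel's formula to obtain, for $t\in(0,T_0]$ and $w(\tau):=u^2(\tau)+(\mathcal{H}u(\tau))^2$,
\[
\widehat{u(t)}(\xi)=e^{-\mu t\xi^2}e^{\,i\beta t\,\sign(\xi)}\,\widehat{u_0}(\xi)\;-\;\frac{i\xi}{2}\int_0^t e^{-\mu(t-\tau)\xi^2}e^{\,i\beta(t-\tau)\sign(\xi)}\,\widehat{w(\tau)}(\xi)\,d\tau .
\]
Since $H^s(\R)$ is an algebra for $s>\tfrac12$ and $\|\mathcal{H}v\|_{L^2}=\|v\|_{L^2}$, one has $w(\tau)\in L^1(\R)$ with $\|w(\tau)\|_{L^1}=2\|u(\tau)\|_{L^2}^2\le 2\|u\|_{\mathcal{C}([0,T_0],H^s)}^2$, so $\widehat{w(\tau)}$ is continuous and bounded uniformly in $\tau\in[0,T_0]$.

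Next I would examine the one-sided limits of $\widehat{u(t)}$ at the origin. Because $\gamma>1$, hypothesis (\ref{Def-Data}) gives $u_0\in L^1(\R)$, so $\widehat{u_0}$ is continuous and $\widehat{u_0}(0)=M(u_0)$. In the identity above the Duhamel term is $O(|\xi|)$ as $\xi\to0$ --- its modulus is at most $\tfrac{|\xi|}{2}\int_0^t\|w(\tau)\|_{L^1}\,d\tau\le |\xi|\,T_0\,\|u\|_{\mathcal{C}([0,T_0],H^s)}^2$ --- whereas the linear term tends to $e^{\pm i\beta t}M(u_0)$ according to the sign of $\xi$; hence, for every $t\in(0,T_0]$,
\[
\lim_{\xi\to0^+}\widehat{u(t)}(\xi)=e^{\,i\beta t}\,M(u_0),\qquad \lim_{\xi\to0^-}\widehat{u(t)}(\xi)=e^{-i\beta t}\,M(u_0).
\]
Fixing $t_0\in(0,T_0]$ with $\sin(\beta t_0)\neq0$ --- which exists since $\beta\neq0$ (any sufficiently small $t_0>0$ works) --- and using (\ref{Mass-Initial-Datum}), these two one-sided limits are distinct at $t=t_0$, so $\widehat{u(t_0)}$ has a genuine jump at $\xi=0$; by the Riemann--Lebesgue lemma this is incompatible with $u(t_0,\cdot)\in L^1(\R)$.

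On the other hand, (\ref{Decay-Solution-Fast}) forces $u(t_0,\cdot)\in L^1(\R)$: since $u\in\mathcal{C}([0,T_0],H^s(\R))$ with $s>\tfrac32$ we have $u(t_0,\cdot)\in L^\infty(\R)$, and then (\ref{Decay-Solution-Fast}) yields
\[
\int_{\R}|u(t_0,x)|\,dx\;\le\;2M_\varepsilon\,\|u(t_0)\|_{L^\infty}+\frac{C_2}{t_0^{1/2}}\int_{|x|>M_\varepsilon}\frac{dx}{|x|^{1+\varepsilon}}\;<\;\infty ,
\]
contradicting the previous paragraph. Hence (\ref{Decay-Solution-Fast}) cannot hold, and a fortiori (\ref{Decay-Solution-Fast}) and (\ref{Decay-Solution-Mean-Fast}) cannot hold simultaneously, which is the assertion of the proposition. (One may equally contradict (\ref{Decay-Solution-Mean-Fast}) directly, since the same jump of $\widehat{u(t_0)}$ at $\xi=0$ also rules out $u(t_0,\cdot)\in L^2(\R,|x|\,dx)$, consistently with the expected $1/|x|$ spatial profile.)

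The step I expect to require the most care --- though I do not foresee a genuine obstacle --- is the rigorous justification of the Fourier-side Duhamel identity and of the interchange of the limit $\xi\to0$ with the $\tau$-integral, both of which rest on the uniform-in-$\tau$ bound for $\widehat{w(\tau)}$ established above; all of this sits within the well-posedness machinery behind Theorem \ref{Th-Main}. Conceptually, the mechanism is simply that the nonlinearity is an exact $x$-derivative of an $L^1$ function, so it contributes only an $O(|\xi|)$ term near $\xi=0$ and cannot compensate the jump discontinuity of the linear propagator $e^{-\mu t\xi^2}e^{\,i\beta t\,\sign(\xi)}$ at the origin --- a jump present precisely because $\beta\neq0$.
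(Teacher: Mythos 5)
Your argument is correct, and it takes a genuinely different route from the paper's. The paper works entirely in physical space: it decomposes the Duhamel formula around the kernel profile $K(t,x)=-\sin(\beta t)/x+O(t^{1/2}/x^{2})$ from (\ref{Profile-Kernel}) and derives the sharp expansion $u(t,x)=\tfrac{1}{x}\Phi(M(u_0),t,u)+o(1/|x|)$, but to control the nonlinear tail terms it must \emph{assume both} (\ref{Decay-Solution-Fast}) and (\ref{Decay-Solution-Mean-Fast}) (the weighted-$L^2$ hypothesis is used for the Taylor-remainder piece $I_2$, the pointwise one for the far-field piece $I_3$), so it can only conclude that the two cannot hold simultaneously. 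Your Fourier-side argument needs neither hypothesis to produce the obstruction: the nonlinearity, being an exact $x$-derivative of the $L^1$ function $u^2+(\mathcal{H}u)^2$, contributes $O(|\xi|)$ near $\xi=0$ uniformly in $t\in[0,T_0]$ using only the $\mathcal{C}_tH^s_x$ bound, while the symbol $e^{i\beta t\,\sign(\xi)}$ together with $M(u_0)\neq0$ forces distinct one-sided limits $e^{\pm i\beta t}M(u_0)$ of $\widehat{u(t)}$ at $\xi=0$ whenever $\sin(\beta t)\neq0$; since (\ref{Decay-Solution-Fast}) alone already places $u(t_0,\cdot)$ in $L^1(\R)$, whose Fourier transform must be continuous, you refute the first alternative unconditionally. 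This is strictly stronger than the stated proposition and is the exact solution-level analogue of Remark \ref{Rmk-1} on the kernel. Two points merit care in a write-up: (i) the interchange of the Fourier transform with the Duhamel time integral, and the identification of the a.e.-defined $\widehat{u(t)}$ with the everywhere-defined right-hand side, should be justified (Bochner integral in $L^2$ plus Fubini, as you indicate); (ii) your parenthetical claim that the jump also contradicts (\ref{Decay-Solution-Mean-Fast}) is true but not immediate --- it amounts to $u(t_0,\cdot)\in L^2(\R,|x|dx)\Rightarrow\widehat{u(t_0)}\in H^{1/2}$ together with the divergence of the Gagliardo seminorm $\iint |f(\xi)-f(\zeta)|^2|\xi-\zeta|^{-2}\,d\xi\,d\zeta$ across a jump --- though it is not needed, since your main line already closes the proof.
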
	

In order to explain in what sense this result suggests the optimality of the decay $1/|x|$, we provide a brief explanation of the main idea of the proof. First, we discuss the meaning of (\ref{Decay-Solution-Fast}) and (\ref{Decay-Solution-Mean-Fast}). Note that estimate (\ref{Decay-Solution-Fast}) states that the solution $u(t,x)$ of equation (\ref{Main-Equation}) decays at infinity faster than $1/|x|$, while equation (\ref{Decay-Solution-Mean-Fast}) shows that, for any time $0 \le t \le T_0$, the solution $u(t,x)$ satisfies $|x|u(t,\cdot) \in L^2(\R)$. Such weighted Lebesgue spaces have been used in previous works concerning spatial decay in one-dimensional dispersive models \cite{Fonseca,Munoz,Nahas}. Specifically, the fact that $|x|u(t,x)$ is an $L^2$-function implies that, on average, $u(t,x)$ must decay at infinity faster than $1/|x|$. 

\medskip

In this context, assuming that both (\ref{Decay-Solution-Fast}) and (\ref{Decay-Solution-Mean-Fast}) hold, a contradiction arises. Indeed, under these assumptions, and assuming in addition the technical nonzero-mean condition on the initial data (\ref{Mass-Initial-Datum}), we are able to show that, for any fixed time $0 < t \le T_0$, the solution exhibits the following sharp asymptotic behavior:
\[ u(t,x)=\frac{1}{x} \Phi\big(M(u_0),t,u\big)+ o(t)\left(\frac{1}{|x|}\right), \qquad  |x|\to+\infty, \]
where the function $\Phi\big(M(u_0), t, u\big)$, explicitly defined in equation (\ref{Phi}) below, does not depend on the spatial variable $x$, and the expression $o(t)(1/|x|)$ satisfies 
\[  \lim_{|x|\to+\infty} \frac{o(t)(1/|x|)}{1/|x|}=0. \]
Ultimately, this asymptotic profile yields a contradiction with assumptions (\ref{Decay-Solution-Fast}) and (\ref{Decay-Solution-Mean-Fast}), respectively. 

\medskip

Returning to Theorem \ref{Main-Equation}, note that, as a by-product, we also prove the local well-posedness of equation (\ref{Main-Equation}) in the space $H^s(\R)$ for $s > \frac{3}{2}$. In this sense, we partially extend to the whole real line $\R$ the result obtained in \cite[Proposition 3.3]{Constantin-Molinet} for the torus $\mathbb{T}$. Additionally, following similar arguments as in \cite[Proposition 3.4]{Constantin-Molinet}, the local well-posedness in $H^s$-spaces can be extended to $s > -\frac{1}{2}$. Moreover, the question of local well-posedness in the critical space $H^{-\frac{1}{2}}$, in both the periodic and non-periodic cases, appears to be far from obvious.

\medskip

On the other hand, the global-in-time existence of $H^s$-solutions to equation (\ref{Main-Equation}) is an interesting question, since the arguments used in the periodic setting are no longer valid on the whole real line $\R$. In fact, the existence of small $L^2$-solutions belonging to the space $\mathcal{C}([0,+\infty), H^s_0(\mathbb{T}))$ for $s \ge 0$, proved in \cite[Proposition 4.1]{Constantin-Molinet}, relies crucially on Poincaré’s inequality for zero-mean functions. Moreover, the finite-time blow-up of solutions for certain well-prepared large initial data, established in \cite[Proposition 6.1]{Constantin-Molinet}, also exploits tools specific to the periodic setting, in particular, the Fourier-mode decomposition of periodic, zero-mean, square-integrable functions

\medskip

Consequently, on the whole real line $\R$, the long-time dynamics of $H^s$-solutions to equation (\ref{Main-Equation}) remains, to the best of our knowledge, a challenging open problem. In future work, we aim to gain a deeper understanding of this issue. Nevertheless, by performing suitable energy estimates, we are able to establish the following blow-up criterion.

\begin{Proposition}\label{Prop-Main-2} Under the same hypothesis as in (\ref{Def-Data-0}),  let $u \in \mathcal{C}\big([0,T_0], H^s(\R) \big)$ be the solution to equation (\ref{Main-Equation}) obtained in Theorem \ref{Th-Main}. Then the following statement holds: for some time 
$T_0<T_*<+\infty$, we have
		\begin{equation}\label{Blow-up-Criterion}
		\lim_{t \to T_{*}}\| u(t,\cdot)\|_{H^s}=+\infty  \quad \text{if and only if} \quad \int_{0}^{T_{*}} \| \partial_x u(t,\cdot)\|_{L^\infty}\, dt =+\infty. 
		\end{equation}
%
%
\end{Proposition}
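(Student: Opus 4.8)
The plan is to deduce (\ref{Blow-up-Criterion}) from the continuation mechanism attached to the local well-posedness of Theorem~\ref{Th-Main}: since the existence time produced by the contraction argument depends on the datum only through $\|u_0\|_{H^s}$ and is non-increasing in this quantity, the $H^s$-solution can be prolonged beyond any finite time as long as its $H^s$-norm remains bounded. Consequently, at a finite maximal time $T_*$ one has $\lim_{t\to T_*}\|u(t,\cdot)\|_{H^s}=+\infty$ precisely when $\sup_{[0,T_*)}\|u(t,\cdot)\|_{H^s}=+\infty$, and the ``only if'' part of (\ref{Blow-up-Criterion}) is then immediate: if the solution does not blow up, it extends continuously to the compact interval $[0,T_*]$, and since $s>\tfrac{3}{2}$ the embedding $H^{s-1}(\R)\hookrightarrow L^\infty(\R)$ gives $\|\partial_x u(t,\cdot)\|_{L^\infty}\lesssim\|u(t,\cdot)\|_{H^s}$, which is bounded on $[0,T_*]$. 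All the content is therefore in the a priori estimate: \emph{if $\int_0^{T_*}\|\partial_x u(t,\cdot)\|_{L^\infty}\,dt<+\infty$, then $\sup_{[0,T_*)}\|u(t,\cdot)\|_{H^s}<+\infty$.}

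First I would perform the $L^2$ energy estimate. Pairing (\ref{Main-Equation}) with $u$ and using that $H^s(\R)$-functions vanish at infinity (so that $u^{3}$ and $(\mathcal{H}u)^{3}$ do), the Burgers-type term $\int_{\R}u^{2}\partial_x u\,dx=\tfrac{1}{3}\int_{\R}\partial_x(u^{3})\,dx$ drops; the Coriolis term $\beta\int_{\R}(\mathcal{H}u)\,u\,dx$ vanishes by skew-adjointness of $\mathcal{H}$; the nonlocal term contributes $-\int_{\R}(\mathcal{H}u)\,\partial_x(\mathcal{H}u)\,u\,dx=\tfrac{1}{2}\int_{\R}(\mathcal{H}u)^{2}\,\partial_x u\,dx$, bounded by $\tfrac{1}{2}\|\partial_x u\|_{L^\infty}\|u\|_{L^2}^{2}$ since $\|\mathcal{H}u\|_{L^2}=\|u\|_{L^2}$; and the viscous term yields $-\mu\|\partial_x u\|_{L^2}^{2}$. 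This gives $\frac{d}{dt}\|u\|_{L^2}^{2}+2\mu\|\partial_x u\|_{L^2}^{2}\le\|\partial_x u\|_{L^\infty}\|u\|_{L^2}^{2}$. By Gr\"onwall's lemma $\sup_{[0,T_*)}\|u(t,\cdot)\|_{L^2}<+\infty$; integrating in time then gives $\int_{0}^{T_*}\|\partial_x u(t,\cdot)\|_{L^2}^{2}\,dt<+\infty$, and, by Cauchy--Schwarz and $T_*<+\infty$, $\int_{0}^{T_*}\|\partial_x u(t,\cdot)\|_{L^2}\,dt<+\infty$.

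Next I would perform the $H^s$ estimate with $J^{s}:=(1-\partial_x^{2})^{s/2}$. Writing the two nonlinear terms in divergence form, $u\,\partial_x u+(\mathcal{H}u)\,\partial_x(\mathcal{H}u)=\partial_x q$ with $q:=\tfrac{1}{2}\big(u^{2}+(\mathcal{H}u)^{2}\big)$, pairing $J^{s}$ applied to (\ref{Main-Equation}) with $J^{s}u$, and using that $\mathcal{H}$ commutes with $J^{s}$ and is skew-adjoint (so the Coriolis term again disappears), one obtains $\tfrac{1}{2}\frac{d}{dt}\|u\|_{H^s}^{2}+\mu\|\partial_x u\|_{H^s}^{2}=\langle J^{s}q,\partial_x J^{s}u\rangle_{L^2}\le\|J^{s}q\|_{L^2}\|\partial_x u\|_{H^s}$; Young's inequality absorbs the last factor into the dissipation, leaving $\frac{d}{dt}\|u\|_{H^s}^{2}\le\tfrac{1}{\mu}\|J^{s}q\|_{L^2}^{2}$. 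The fractional Leibniz (Kato--Ponce) estimate gives $\|J^{s}q\|_{L^2}\lesssim\big(\|u\|_{L^\infty}+\|\mathcal{H}u\|_{L^\infty}\big)\|u\|_{H^s}$, while the one-dimensional Agmon inequality, together with $\|\mathcal{H}u\|_{L^2}=\|u\|_{L^2}$ and $\|\partial_x\mathcal{H}u\|_{L^2}=\|\partial_x u\|_{L^2}$, yields $\|u\|_{L^\infty}^{2}+\|\mathcal{H}u\|_{L^\infty}^{2}\lesssim\|u\|_{L^2}\|\partial_x u\|_{L^2}$. Hence $\frac{d}{dt}\|u\|_{H^s}^{2}\lesssim\tfrac{1}{\mu}\|u\|_{L^2}\|\partial_x u\|_{L^2}\|u\|_{H^s}^{2}$, and a final Gr\"onwall step — using $\int_{0}^{T_*}\|u\|_{L^2}\|\partial_x u\|_{L^2}\,dt\le\big(\sup_{[0,T_*)}\|u\|_{L^2}\big)\int_{0}^{T_*}\|\partial_x u\|_{L^2}\,dt<+\infty$ from the previous step — gives $\sup_{[0,T_*)}\|u(t,\cdot)\|_{H^s}<+\infty$, which is what we wanted.

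I expect the main difficulty to be two-fold. On the technical side, the identities above are valid a priori for smooth solutions only, so they must be justified for the $H^s$-solution of Theorem~\ref{Th-Main} through a standard regularization procedure (mollify, estimate the mollifier commutators, pass to the limit). On the structural side, the crucial point — and the reason a direct $H^s$-estimate does \emph{not} close — is that $\|J^{s}q\|_{L^2}$ cannot be bounded by $\|\partial_x u\|_{L^\infty}$ alone, because $\mathcal{H}$ is unbounded on $L^\infty$; one must therefore first extract the parabolic gain at the $L^2$ level to obtain $\partial_x u\in L^1\big([0,T_*);L^2(\R)\big)$, and only then convert it, via the Agmon inequality, into a time-integrable coefficient in the $H^s$ differential inequality. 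The divergence form of both nonlinearities and the cancellation of the Coriolis term in the energy pairings are exactly what make this scheme go through.
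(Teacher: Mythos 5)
Your proof is correct, and for the substantive implication (``$\int_0^{T_*}\|\partial_x u\|_{L^\infty}\,dt<+\infty$ implies no $H^s$ blow-up'') it takes a genuinely different route from the paper. Both arguments begin identically: the $L^2$ energy identity, the rewriting $-\int(\mathcal{H}u)\,\partial_x(\mathcal{H}u)\,u\,dx=\tfrac12\int(\mathcal{H}u)^2\partial_x u\,dx\le\|\partial_x u\|_{L^\infty}\|u\|_{L^2}^2$, and Gr\"onwall, yielding $\sup_{[0,T_*)}\|u\|_{L^2}<+\infty$; the converse direction (bounded $H^s$ norm implies a finite integral via $H^{s-1}\hookrightarrow L^\infty$) is also the same. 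The paper then stops at the $L^2$ level and argues by re-solving from the datum $u(T_*-\varepsilon,\cdot)$, claiming a uniform existence time from the bound $\|u(T_*-\varepsilon,\cdot)\|_{L^2}^2\le M_0$; but the Picard time (\ref{Existence-Time-Hs}) is bounded \emph{below} only in terms of $\|v_0\|_{H^s}$ (the displayed inequality $T_0'(v_0)\le \tfrac12(4C\|v_0\|_{L^2})^{-2}$ goes the wrong way for this purpose), so that continuation step is delicate as written. Your approach instead keeps the dissipation in the $L^2$ balance to get $\partial_x u\in L^2_t L^2_x$, hence $L^1_t L^2_x$, and then closes a genuine $H^s$ differential inequality $\frac{d}{dt}\|u\|_{H^s}^2\lesssim \mu^{-1}\|u\|_{L^2}\|\partial_x u\|_{L^2}\|u\|_{H^s}^2$ via Kato--Ponce, Agmon, and absorption of $\|\partial_x u\|_{H^s}^2$ into the viscosity. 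This buys a self-contained a priori bound $\sup_{[0,T_*)}\|u\|_{H^s}<+\infty$ that feeds the standard continuation criterion (existence time depending only on $\|u_0\|_{H^s}$) without any intermediate re-solving, and your structural remark --- that the nonlocal term forces one to route the $L^\infty_x$ control of $\partial_x u$ through the parabolic gain rather than through a commutator estimate, since $\mathcal{H}$ is unbounded on $L^\infty$ --- correctly identifies why a naive $H^s$ commutator argument would not close with $\|\partial_x u\|_{L^\infty}$ alone. The only remaining work, which you flag appropriately, is the routine mollification needed to justify the $J^s$ pairing for the $\mathcal{C}_tH^s_x$ solution.
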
	

From this blow-up criterion and well-known properties of the Hilbert transform, for a given time $T_0 < T_*$, assuming that 
\[  \int_{0}^{T_{*}} \| \partial_x u(t,\cdot)\|_{L^\infty}\, dt <+\infty, \] 
it follows that the solution $u(t,x)$ of equation (\ref{Main-Equation}) satisfies  
\[ u, \, \mathcal{H} u  \in \mathcal{C}\big([0,T_*], H^s(\R)\big). \]
It is then natural to ask whether this information on $u$ allows the pointwise spatial decay estimate (\ref{Decay-Solution}) to be extended to all times $0<t\leq T_*$.  Nevertheless, to the best of our knowledge, this does not seem to be possible in full generality. In this setting, in order to extend the pointwise estimate (\ref{Decay-Solution}) to further times, we require an additional technical assumption on the solution $u(t,x)$. 
 
\begin{Proposition}\label{Prop-Main-3} The pointwise estimates (\ref{Decay-Solution}) for the solution $u(t,x)$ remain valid for all times $0<t<T_{*}$, with  $T_0 < T_{*}<+\infty$, provided that  
			\begin{equation}\label{Condition-L1-Solution}
		u, \, \mathcal{H}u \in \mathcal{C}\left([0,T_*], H^s \cap L^1(\R)\right). 
			\end{equation}
\end{Proposition}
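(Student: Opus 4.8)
The plan is to revisit the proof of Theorem~\ref{Th-Main} and to replace the smallness of $T_0$ -- which there serves only to close a contraction argument -- by the global-in-$[0,T_*]$ a priori information supplied by hypothesis~(\ref{Condition-L1-Solution}). Two preliminary remarks. First, since $\gamma>0$, assumption~(\ref{Def-Data}) already forces $u_0\in L^1(\R)$. Second, it is convenient -- and, for the $\mathcal{H}u$ component, essentially unavoidable -- to work with the linearly coupled system satisfied by the pair $(u,\mathcal{H}u)$: writing the nonlinearity in divergence form $u\,\partial_x u+(\mathcal{H}u)\,\partial_x(\mathcal{H}u)=\tfrac12\partial_x\big(u^2+(\mathcal{H}u)^2\big)$ and using $\mathcal{H}^2=-\mathrm{Id}$, this system has linear part $e^{\mu t\partial_x^2}\otimes R(\beta t)$ (the heat semigroup tensored with the rotation by angle $\beta t$ in the $(u,\mathcal{H}u)$-plane). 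Equivalently, $u$ obeys the mild formula $u(t)=K(t)*u_0-\tfrac12\int_0^t\partial_x K(t-\tau)*g(\tau)\,d\tau$ with $K$ the kernel of~(\ref{Kernel}) and $g:=u^2+(\mathcal{H}u)^2\ge 0$, while $\mathcal{H}u(t)$ is obtained by propagating $\mathcal{H}u$ forward through the same semigroup, so that -- crucially -- the Hilbert transform is only ever applied to the heat kernel (producing $\mathcal{H}G_s$, $\partial_x\mathcal{H}G_s$, which still decay, like $1/|x|$ and $1/|x|^2$ respectively) or to the fast-decaying datum $u_0$, and never to $g$ nor to a slowly decaying function. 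We keep all the kernel bounds from the proof of Theorem~\ref{Th-Main}: Gaussian-type tails for $G_s,\partial_x G_s$; $|\mathcal{H}G_s(x)|\lesssim(1+|x|)^{-1}$, $|\partial_x\mathcal{H}G_s(x)|\lesssim(1+|x|)^{-2}$; $\|\partial_x G_s\|_{L^1}+\|\partial_x\mathcal{H}G_s\|_{L^1}\lesssim s^{-1/2}$ (all constants locally bounded on $s>0$); and the Hilbert-decay lemma: if $h\in H^s$, $s>\tfrac12$, and $|h(x)|\le A(1+|x|)^{-p}$ with $p>1$, then $|\mathcal{H}h(x)|\le C(A+\|h\|_{L^1})(1+|x|)^{-\min(p-1,1)}$.

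From~(\ref{Condition-L1-Solution}) and $s>\tfrac32>\tfrac12$, Sobolev embedding gives $u,\mathcal{H}u\in\mathcal{C}([0,T_*],L^\infty)$, and, $H^s$ being an algebra, $g\in\mathcal{C}([0,T_*],H^s)$ with $\|g(\tau)\|_{L^1}=\|u(\tau)\|_{L^2}^2+\|\mathcal{H}u(\tau)\|_{L^2}^2$ bounded; put $N:=\sup_{[0,T_*]}\big(\|u\|_{H^s\cap L^1}+\|\mathcal{H}u\|_{H^s\cap L^1}\big)<\infty$. The decisive elementary observation is this: if $h\in L^1(\R)$ with $|h(x)|\le A(1+|x|)^{-m}$, $0<m\le 1$, then splitting a convolution $\kappa*h$ at $|y|=|x|/2$ and using the smallness of $\kappa$ on $\{|y|\le|x|/2\}$ together with the decay of $h$ on $\{|y|>|x|/2\}$ yields $|(\kappa*h)(x)|\le C(1+|x|)^{-m}(\|h\|_{L^1}+A)$ for every kernel $\kappa\in\{G_s,\partial_x G_s,\partial_x\mathcal{H}G_s\}$ above (with the extra factor $s^{-1/2}$ for the $\partial_x$ ones), the $L^1$-norm of $h$ being used precisely on the region where the kernel is negligible; the only non-integrable kernel, $\mathcal{H}G_s$, appears solely through $\mathcal{H}K(t)*u_0=\cos(\beta t)\,\mathcal{H}(G_t*u_0)+\sin(\beta t)\,(G_t*u_0)$ and is controlled by the Hilbert-decay lemma applied to the smooth, fast-decaying ($(1+|x|)^{-(1+\gamma)}$) function $G_t*u_0$, producing exponent $\min(\gamma,1)=m$. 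Hence the slow rate $(1+|x|)^{-m}$, $m=\min(1,\gamma)\le 1$ -- which is all that Theorem~\ref{Th-Main} provides at positive times, and which is \emph{not} self-sustaining from the pointwise information alone -- becomes self-sustaining once the $L^1$ control is in hand. Finally, $|g(\tau,y)|\le\|u(\tau)\|_{L^\infty}|u(\tau,y)|+\|\mathcal{H}u(\tau)\|_{L^\infty}|\mathcal{H}u(\tau,y)|\le CN(1+|y|)^{-m}\Psi(\tau)$, with $\Psi(\tau):=\sup_{x}(1+|x|)^m\big(|u(\tau,x)|+|\mathcal{H}u(\tau,x)|\big)$, so the quadratic nonlinearity is \emph{effectively linear} in $\Psi$ with coefficient $\lesssim N$ (here $H^s\hookrightarrow L^\infty$ is used), while $\|g(\tau)\|_{L^1}\le CN^2$.

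With these ingredients the extension is a continuation argument. Set $\Theta(t):=\sup_{0<\tau\le t}\tau^{1/2}\Psi(\tau)$, so $\Theta(T_0)<\infty$ by Theorem~\ref{Th-Main}, and $\Theta(t)<\infty$ is equivalent to~(\ref{Decay-Solution}) on $(0,t]$. We claim: whenever $\Theta(t_1)<\infty$ and $t_1<T_*$, one has $\Theta(t_1+\delta)<\infty$ for some $\delta=\delta(N)>0$ \emph{independent of $t_1$}; iterating finitely many times then yields $\Theta(T)<\infty$ for every $T<T_*$. For the claim, base the coupled-system Duhamel identities at $t_0=t_1/2$ and consider $t\in(t_1,t_1+\delta]$: the linear terms are heat-convolutions of $u(t_1/2)$ and $\mathcal{H}u(t_1/2)$, which lie in $L^1$ (hypothesis) and decay like $(1+|x|)^{-m}$ (since $\Theta(t_1)<\infty$), hence are $\le C(N,\Theta(t_1))(1+|x|)^{-m}$; the Duhamel integral $\int_{t_1/2}^t$ splits as $\int_{t_1/2}^{t_1}+\int_{t_1}^t$, where the first part uses the already-known bound $\Psi(\tau)\le\Theta(t_1)\tau^{-1/2}$ and is $\le C(N,\Theta(t_1))(1+|x|)^{-m}$, while the second part, by the effective linearization and the $(t-\tau)^{-1/2}$ kernel bounds, is $\le C(1+|x|)^{-m}\big(N^2\delta^{1/2}+N\delta^{1/2}\sup_{(t_1,t]}\Psi\big)$. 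Choosing $\delta$ with $CN\delta^{1/2}<\tfrac12$ (which depends only on $N$), a standard continuity/bootstrap argument -- or, equivalently, a fixed point for the Duhamel map in a weighted $L^\infty$-space on $(t_1,t_1+\delta]$, identified with $u$ via the uniqueness in Theorem~\ref{Th-Main} -- absorbs the last term and gives $\Theta(t_1+\delta)\le C(N,\Theta(t_1))<\infty$.

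The main obstacle, and the reason hypothesis~(\ref{Condition-L1-Solution}) is needed, is the $\mathcal{H}u$ component. Since the Hilbert transform does not preserve pointwise decay at rates slower than $1/|x|$, the decay of $\mathcal{H}u(t)$ for $t>0$ cannot be inferred from that of $u(t)$ by applying $\mathcal{H}$; it has to be extracted from the coupled-system formulation, in which $\mathcal{H}u$ is transported as an independent function by the semigroup $e^{\mu t\partial_x^2}\otimes R(\beta t)$, so that $\mathcal{H}$ meets only the heat kernel or the fast-decaying datum. The $L^1$ hypothesis is exactly what pays for the non-integrable $1/|x|$ tail of $\mathcal{H}K$ against the near region of the convolutions and lets $\mathcal{H}u$ be transported forward in time without reapplying $\mathcal{H}$, while the $H^s$ hypothesis is what linearizes the nonlinearity; neither alone suffices. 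Everything else -- assembling the kernel estimates and the Hilbert-decay lemma already used for Theorem~\ref{Th-Main}, making the continuity/bootstrap of the third step precise, and tracking the (harmless but growing) dependence of the constant in~(\ref{Decay-Solution}) on $T_*$ through the $O((T_*-T_0)/\delta)$ steps -- is technical but routine.
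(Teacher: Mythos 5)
Your proposal is correct, and its skeleton is the same as the paper's: a weighted quantity $t^{1/2}\sup_x (1+|x|)^{\min(1,\gamma)}\big(|u|+|\mathcal{H}u|\big)$ is propagated through the mild formulation by splitting convolutions at $|y|=|x|/2$, with the $L^1$ hypothesis paying for the non-integrable $1/|x|$ tail of the kernel on the near region, the $H^s$ hypothesis making the quadratic term effectively linear in the weighted quantity on the far region, and a finite iteration over time steps whose length depends only on $\sup_{[0,T_*]}\|(u,\mathcal{H}u)\|_{H^s\cap L^1}$. Two of your choices differ from the paper and are worth noting. First, you put the derivative on the kernel (Duhamel term $\partial_x K(t-\tau)\ast g$ with $g=u^2+(\mathcal{H}u)^2$) and use $\|\partial_x K(t-\tau)\|_{L^1}\lesssim (t-\tau)^{-1/2}$ together with $\|g\|_{L^1}$ and the pointwise decay of $g$; the paper keeps $\bK(t-\tau)\ast(\bu\,\partial_x\bu)$ and pairs $\|\bK(t-\tau)\|_{L^2}\lesssim(t-\tau)^{-1/2}$ with $\|\partial_x\bu\|_{L^2}$ via Cauchy--Schwarz. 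Both close the same Volterra inequality. Second --- and here your route is genuinely sharper --- you diagonalize the semigroup as $e^{\mu t\partial_x^2}\otimes R(\beta t)$, i.e.\ $K(t)=\cos(\beta t)G_t-\sin(\beta t)\mathcal{H}G_t$ and $\mathcal{H}K(t)=\cos(\beta t)\mathcal{H}G_t+\sin(\beta t)G_t$, so that when the Duhamel formula is rebased at a positive time $t_0$ the linear term is a pure heat convolution of the pair $(u(t_0),\mathcal{H}u(t_0))$, both of which lie in $L^1$ by hypothesis and decay like $(1+|x|)^{-m}$ by the induction; since $G_s$ is integrable with Gaussian tails, the slow rate $m\le 1$ then propagates. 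The paper instead asserts that its linear estimate \eqref{g1} ``remains valid'' for the datum $\bu(T_1,\cdot)$, but the far-region bound \eqref{Estim-Decay-Linear-2} behind \eqref{g1} uses the factorization $(1+|y|)^{-1-\gamma}=(1+|y|)^{-\gamma}(1+|y|)^{-1}$ with the leftover $(1+|y|)^{-1}\in L^q$ tested against $\bK\in L^p$ --- a step that consumes the extra unit of decay in \eqref{Def-Data} and is not available for $\bu(T_1,\cdot)$, which decays only at the rate $(1+|x|)^{-m}$ being propagated (and $\bK(t,\cdot)\notin L^1$, so one cannot simply integrate the kernel on the far region). Your rotation trick repairs exactly this point. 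Finally, your observation that the absorption step requires either a continuity argument or a fixed point identified with $u$ by uniqueness addresses the local-boundedness hypothesis that the paper's Grönwall lemma (Lemma \ref{Lemma-Gronwall}) presupposes. The only place where you should be slightly more careful is the ``Hilbert-decay lemma'': the local principal-value contribution needs an interpolation between the Lipschitz bound from $H^s$ and the pointwise decay of $G_t\ast u_0$ to produce decay in $x$, but this works out and yields at least the exponent $\min(\gamma,1)$ you claim.
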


Even when the initial data belong to  $L^1(\R)$, it is not well understood whether this property is preserved by the associated solution of equation (\ref{Main-Equation}); see Remark \ref{Rmk-2} for more details on this point. Consequently, in this result we provide a partial answer to the question raised above.

\medskip

{\bf Organization of the article}. The rest of the paper is organized as follows. Our proof relies strongly on the mild formulation of equation (\ref{Equation-Mild}), specifically on the convolution kernel arising from the linear terms. In Section \ref{Sec:Kernel}, we introduce this kernel and derive several useful estimates. Section \ref{Sec:Spatial-Decay} is devoted to the proof of Theorem \ref{Th-Main} and Proposition \ref{Prop-Main-1}. Finally, in Section \ref{Sec:Global-in-time}, we provide the proof of Propositions \ref{Prop-Main-2} and \ref{Prop-Main-3}. 

\medskip

{\bf Notation}. Throughout this note, $\widehat{\varphi}(\xi)$ denotes the Fourier transform of $\varphi$. Additionally, $C>0$ denotes a generic constant, depending on the parameters of the model, which may change from one line to the next.

\section{Kernel estimates}\label{Sec:Kernel}
We introduce the kernel $K(t,x)$ as the solution of the linear problem associated to equation (\ref{Main-Equation}):
\begin{equation*}
    \begin{cases}\vspace{1mm}
    \partial_t  K + \beta\, \mathcal{H} K - \mu\,\partial^2_x K=0, \qquad \beta\neq 0, \quad \mu>0,\\
    K(t,0)=\delta_0,
    \end{cases}
\end{equation*}
where $\delta_0$ denotes the Dirac mass at the origin. A direct computation yields the explicit expression for $K(t,x)$:
\begin{equation}\label{Kernel}
    K(t,x)=\int_{-\infty}^{+\infty} e^{2\pi i x \xi} e^{-\mu \xi^2 t + i \beta \sign(\xi)t} d\xi. 
\end{equation}
Using this representation, and following some ideas in \cite{Cortez-Jarrin-1,Cortez-Jarrin-2,Cortez-Jarrin3},  we derive some useful pointwise estimates for $K(t,x)$ and $\mathcal{H}K(t,x)$.
\begin{Proposition}\label{Prop-Kernel-Estim} There exists a constant $C=C(\beta,\mu)>0$,  which depends on the parameters $\beta\neq 0$ and $\mu>0$, such that for any $t>0$ and $x\in \R$ it holds:
\begin{equation}\label{Estim-Ker-1}
    | K(t,x)| + |\mathcal{H}K(t,x)| \leq C\,\frac{\eta(t)}{t^{\frac{1}{2}}}\, \frac{1}{1+|x|}, \qquad \eta(t):=1+t^{\frac{1}{2}}+t,
\end{equation}
where $\mathcal{H}$ is the Hilbert's transform defined in expression (\ref{Hilbert}). 
\end{Proposition}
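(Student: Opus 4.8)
The plan is to estimate $K$ and $\mathcal{H}K$ directly from the explicit Fourier representation (\ref{Kernel}), reducing matters to a single elementary one-dimensional oscillatory integral. The only feature that requires genuine care is the jump of $\sign(\xi)$ at $\xi=0$: it prevents integration by parts across the origin, and the resulting non-vanishing endpoint term is exactly what forces the decay rate $1/(1+|x|)$ rather than a faster one, which is the analytic manifestation of the Coriolis contribution $\beta\neq0$.

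\emph{Reduction.} First I would split the integral in (\ref{Kernel}) at $\xi=0$. Since $\sign(\xi)=\pm1$ on $\pm\xi>0$, one obtains $K(t,x)=e^{i\beta t}I_+(t,x)+e^{-i\beta t}I_-(t,x)$, where $I_+(t,x):=\int_0^{+\infty}e^{2\pi i x\xi-\mu t\xi^2}\,d\xi$ and $I_-(t,x):=\int_{-\infty}^{0}e^{2\pi i x\xi-\mu t\xi^2}\,d\xi$. Applying in the same way the Fourier multiplier $-i\,\sign(\xi)$ of $\mathcal{H}$ gives $\mathcal{H}K(t,x)=-i\,e^{i\beta t}I_+(t,x)+i\,e^{-i\beta t}I_-(t,x)$. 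Because $|e^{\pm i\beta t}|=1$, this yields $|K(t,x)|+|\mathcal{H}K(t,x)|\le 2\big(|I_+(t,x)|+|I_-(t,x)|\big)$, while the change of variable $\xi\mapsto-\xi$ shows $|I_-(t,x)|=|I_+(t,-x)|$. Hence it suffices to prove $|I_+(t,x)|\le C(\mu)\,\eta(t)\,t^{-1/2}(1+|x|)^{-1}$ for every $t>0$ and $x\in\R$.

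\emph{The two regimes in $x$ and conclusion.} For $|x|\le1$ I would use the trivial bound $|I_+(t,x)|\le\int_0^{+\infty}e^{-\mu t\xi^2}\,d\xi=\tfrac12\sqrt{\pi/(\mu t)}$, absorbing constants via $1\le 2/(1+|x|)$ and $1\le\eta(t)$. For $|x|>1$ I would integrate by parts once, writing $e^{2\pi i x\xi}=(2\pi i x)^{-1}\partial_\xi e^{2\pi i x\xi}$: the endpoint $\xi=+\infty$ contributes nothing, the endpoint $\xi=0$ contributes $-(2\pi i x)^{-1}$, of modulus $(2\pi|x|)^{-1}$, and, using $\partial_\xi e^{-\mu t\xi^2}=-2\mu t\,\xi\,e^{-\mu t\xi^2}$ together with $\int_0^{+\infty}\xi e^{-\mu t\xi^2}\,d\xi=(2\mu t)^{-1}$, the remaining integral is bounded by $(2\pi|x|)^{-1}$ as well — the factor $\mu t$ from differentiating the Gaussian cancels exactly. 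This gives $|I_+(t,x)|\le(\pi|x|)^{-1}\le 2\big(\pi(1+|x|)\big)^{-1}$, uniformly in $t$, which is absorbed into the claimed bound since $\eta(t)\,t^{-1/2}\ge1$. Combining the two regimes yields (\ref{Estim-Ker-1}); in fact only $\eta(t)\ge\max(1,t^{1/2})$ is used, the precise majorant $\eta(t)=1+t^{1/2}+t$ being kept for convenience in the later Duhamel estimates.

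I do not expect a real obstacle in this argument: the single delicate point is the splitting at $\xi=0$, and the only structurally interesting observation is that, for $\beta\neq0$, the endpoint terms of $I_+$ and $I_-$ enter $K$ with the opposite phases $e^{i\beta t}$ and $e^{-i\beta t}$ and therefore do not cancel — whereas for $\beta=0$ they cancel and $K$ reduces to the rapidly decaying heat kernel. Promoting this endpoint term to a genuine leading-order asymptotics, i.e. showing that the rate $1/|x|$ is actually attained, is not needed here; it is carried out in Proposition \ref{Prop-Main-1}.
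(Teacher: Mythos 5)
Your argument is correct and follows essentially the same route as the paper: split the Fourier integral at $\xi=0$, integrate by parts once so that the non-cancelling endpoint contribution at $\xi=0$ produces the $1/|x|$ term, and combine with the trivial bound $\|K(t,\cdot)\|_{L^\infty}\leq \|\widehat{K}(t,\cdot)\|_{L^1}\lesssim t^{-1/2}$ for $|x|\leq 1$; your reduction to the phase-free Gaussian integral $I_+$ and the exact cancellation $2\mu t\int_0^\infty \xi e^{-\mu t\xi^2}\,d\xi=1$ are both verified. The only difference is that the paper performs a second integration by parts to get the sharper remainder bound $|I(t,x)|\leq C t^{1/2}/x^2$, which is not needed for the estimate \eqref{Estim-Ker-1} itself but is reused later for the asymptotic profile \eqref{Profile-Kernel} in the proof of Proposition \ref{Prop-Main-1}.
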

\begin{proof}
We begin by considering the function $K(t,x)$. Using  expression (\ref{Kernel}), the identity $\partial_\xi e^{2\pi i x\xi}= 2\pi i x \, e^{2\pi i x\xi}$, and integrating by parts, we obtain
\begin{equation}\label{Iden-Ker-1}
    \begin{split}
 K(t,x)=&\, \int_{-\infty}^{0} e^{2\pi i x\xi} e^{-\mu \xi^2 t - i \beta t}d \xi+  \int_{0}^{+\infty} e^{2\pi i x\xi} e^{-\mu \xi^2 t + i \beta t}d \xi\\
 =&\, \frac{1}{2\pi i x}\int_{-\infty}^{0} \partial_\xi (e^{2\pi i x\xi}) e^{-\mu \xi^2 t - i \beta t}d \xi+ \frac{1}{2\pi i x} \int_{0}^{+\infty} \partial_\xi(e^{2\pi i x\xi}) e^{-\mu \xi^2 t + i \beta t}d \xi\\
 =&\, - \frac{1}{2\pi i x}\int_{-\infty}^{0} e^{2\pi i x\xi} \partial_\xi (e^{-\mu \xi^2 t - i \beta t})d \xi+ \frac{1}{2\pi i x} \left( \left. e^{2\pi i x\xi}e^{-\mu \xi^2 t - i \beta t}\right|^{0}_{-\infty}\right) \\
 &\,- \frac{1}{2\pi i x}\int_{0}^{+\infty} e^{2\pi i x\xi} \partial_\xi (e^{-\mu \xi^2 t + i \beta t})d \xi+\frac{1}{2\pi i x}\left( \left. e^{2\pi i x\xi}e^{-\mu \xi^2 t + i \beta t}\right|^{+\infty}_{0}\right)\\
 =&\,\frac{e^{-i \beta t}-e^{i\beta t}}{2\pi i x}- \underbrace{\frac{1}{2\pi i x}\left(\int_{-\infty}^{0} e^{2\pi i x\xi} \partial_\xi (e^{-\mu \xi^2 t - i \beta t})d \xi+\int_{0}^{+\infty} e^{2\pi i x\xi} \partial_\xi (e^{-\mu \xi^2 t + i \beta t})d \xi \right)}_{I(t,x)}.
    \end{split}
\end{equation}

Here, for any $x\neq 0$, by direct calculation we obtain 
\begin{equation*}
  \frac{e^{-i \beta t}-e^{i\beta t}}{2\pi i x}= -\frac{\sin(\beta t)}{\pi x}.
\end{equation*}
Additionally, for any $x\neq 0$, the second term is estimated as follows:
\begin{equation}\label{Estim-I}
 I(t,x) \leq C\, \frac{t^{\frac{1}{2}}}{x^2}. 
\end{equation}
In fact, following similar computations as above and using the rapid decay properties of the heat kernel  $e^{-\mu \xi^2 t}$, we write
\begin{equation*}
    \begin{split}
I(t,x) =&\, \frac{1}{(2\pi i x)^2} \left(-\int_{-\infty}^{0} e^{2\pi i x\xi} \partial^2_\xi (e^{-\mu \xi^2 t - i \beta t})d \xi + \left. e^{2\pi i x \xi}e^{-\mu \xi^2 t - i \beta t}(-2\mu \xi t)\right|^{0}_{-\infty} \right.\\
    &\,\left. -\int_{0}^{+\infty} e^{2\pi i x\xi} \partial^2_\xi (e^{-\mu \xi^2 t + i \beta t})d \xi +  \left. e^{2\pi i x \xi}e^{-\mu \xi^2 t + i \beta t}(-2\mu \xi t)\right|^{+\infty}_{0}\right)\\
    =&\, \frac{1}{4\pi^2 x^2} \left( \int_{-\infty}^{0} e^{2\pi i x\xi} e^{-\mu \xi^2 t - i\beta t} (4\mu^2 \xi^2 t^2-2\mu t) d\xi+  \int_{0}^{+\infty} e^{2\pi i x\xi} e^{-\mu \xi^2 t + i\beta t} (4\mu^2 \xi^2 t^2-2\mu t) d\xi \right) \\
    \leq &\, \frac{1}{4\pi^2 x^2} \, \int_{-\infty}^{+\infty}e^{-\mu \xi^2 t} (4\mu^2 \xi^2 t^2+2\mu t)d\xi = \frac{t}{4\pi^2 x^2} \, \int_{-\infty}^{+\infty}e^{-\mu (t^{\frac{1}{2}}\xi)^2 } \big(4\mu^2 (t^{\frac{1}{2}}\xi)^2 +2\mu\big)d\xi \leq \,  C\, \frac{t^{\frac{1}{2}}}{x^2}.
    \end{split}
\end{equation*}

Once we have established these identities and estimates, returning to (\ref{Iden-Ker-1}), we obtain for $|x|>1$ 
\begin{equation}\label{Iden-Ker-2}
|K(t,x)| \leq C\, \frac{|\sin (\beta t)|}{|x|}+ C\, \frac{t^{\frac{1}{2}}}{|x|^2}\leq C\, (1+t^{\frac{1}{2}})\, \frac{1}{|x|}.
\end{equation}
Moreover, we deduce the following asymptotic profile, which will be used later:
\begin{equation}\label{Profile-Kernel}
K(t,x)=-\frac{\sin(\beta t)}{x}+I(t,x), t>0, \quad \qquad |x|>1.
\end{equation}

On the other hand, for any $x\in \R$ we  can write
\begin{equation}\label{Iden-Ker-3}
|K(t,x)|\leq  \| K(t,\cdot)\|_{L^\infty}\leq \| \widehat{K}(t,\cdot)\|_{L^1}\leq \int_{-\infty}^{+\infty}e^{-\mu \xi^2 t}\, d\xi \leq \frac{C}{t^{\frac{1}{2}}}.
\end{equation}

Therefore, by combining estimates (\ref{Iden-Ker-2}) and (\ref{Iden-Ker-3}), we arrive at the first desired estimate (\ref{Estim-Ker-1}).

\medskip

The  estimate concerning $\mathcal{H}K(t,x)$  follows from similar computations. For the reader’s convenience, we present below the key calculation.
\begin{equation*}
    \begin{split}
\mathcal{H} K(t,x)=&\,  -i \int_{-\infty}^{+\infty} e^{2\pi i x\xi}\,\sign(\xi)\, e^{-\mu \xi^2 t + i\,\sign(\xi) \beta t}d \xi \\
 =&\, \frac{i}{2\pi i x}\int_{-\infty}^{0} \partial_\xi (e^{2\pi i x\xi}) e^{-\mu \xi^2 t - i \beta t}d \xi-\frac{i}{2\pi i x} \int_{0}^{+\infty} \partial_\xi(e^{2\pi i x\xi}) e^{-\mu \xi^2 t + i \beta t}d \xi\\
 =&\,\frac{e^{-i \beta t}+e^{i\beta t}}{2\pi i  x}- \underbrace{\frac{1}{2\pi i  x}\left(\int_{-\infty}^{0} e^{2\pi i x\xi} \partial_\xi (e^{-\mu \xi^2 t - i \beta t})d \xi-\int_{0}^{+\infty} e^{2\pi i x\xi} \partial_\xi (e^{-\mu \xi^2 t + i \beta t})d \xi \right)}_{J(t,x)},
    \end{split}
\end{equation*}
As above, for $x \neq 0$ we have 
\[ \frac{e^{-i \beta t}+e^{i\beta t}}{2\pi i  x}= \frac{\cos(\beta t)}{\pi i x}, \quad \text{and} \quad J(t,x) \leq C\frac{t^{\frac{1}{2}}}{x^2}, \]
which yields the asymptotic profile 
\begin{equation}\label{Profile-Kernel-Hilbert}
\mathcal{H}K(t,x)= \frac{\cos(\beta t)}{\pi i x} + J(t,x), \qquad t>0, \quad  |x|>1. 
\end{equation}
This completes the proof of Proposition \ref{Prop-Kernel-Estim}. 
\end{proof}

\begin{Remarque}\label{Rmk-1} Note that the asymptotic profile (\ref{Profile-Kernel}) shows that, for $\beta\neq 0$ and any time $t \neq \frac{k \pi}{\beta}$ with $k \in \mathbb{Z}$, the kernel 
$K(t,x)$  has the optimal decay rate  $1/|x|$. Therefore, $K(t,\cdot) \notin L^1(\R)$.  Similarly, from the asymptotic profile (\ref{Profile-Kernel-Hilbert}), the same conclusion holds for $\mathcal{H}K(t,x)$  for any time $t \neq \frac{(2k+1) \pi}{2\beta}$. 

\medskip

Consequently, when $\beta \neq 0$,  there is no positive time $t$ for which both  $K(t,x)$ and $\mathcal{H}K(t,x)$ decay at infinity faster than $1/|x|$, and for which $K(t,\cdot)$ and $\mathcal{H}K(t,\cdot)$ belong to $L^1(\R)$. 
\end{Remarque}

\section{Asymptotic behavior in the spatial variable}\label{Sec:Spatial-Decay}  
\subsection{Proof of Theorem \ref{Th-Main}} We begin by outlining the general idea of the proof. To our knowledge, the Hilbert  transform $\mathcal{H}$ is not bounded on the weighted space $L^\infty\big((1+|\cdot|)^{\min(1,\gamma)}dx\big)$,  which makes it difficult to control the nonlinear term $(\mathcal{H}u)\, \partial_x (\mathcal{H}u)$. 

\medskip

To overcome this issue, in the first step of the proof  we introduce an auxiliary coupled system in  the variables $\tilde{u}$ and $\tilde{v}$,  which will allow to control both $u$ and $\mathcal{H}u$ later.  We then prove the existence of local-in-time   $\mathcal{C}_t H^s_x$-solutions, which also satisfy  
\[ \left\| (1+|x|)^{\min(1,\gamma)} (\tilde{u}, \tilde{v})(t,\cdot)\right\|_{L^\infty}<+\infty.  \]

In the second step, we prove the uniqueness of solutions $(\tilde{u}, \tilde{v})$ to this auxiliary system in the larger space $\mathcal{C}_t H^s_x$. 

\medskip

Finally, in the third step, returning to the original equation (\ref{Main-Equation})  we obtain a solution $u \in \mathcal{C}_t H^s_x$, where the pair $(u, \mathcal{H}u)$ also satisfies the auxiliary system introduced above. By uniqueness, it follows that $(u, \mathcal{H}u)=(\tilde{u}, \tilde{v})$, which leads to the pointwise estimate  (\ref{Decay-Solution}). 

\medskip

{\bf First step: the auxiliary system}. In equation (\ref{Main-Equation}), observe that if $\tu(t,x)$ is a solution with initial datum $\tu_0$, then   $\tv(t,x):= \mathcal{H}\tu(t,x)$ formally satisfies:
\[ \partial_t \tv + \mathcal{H}\big( \tu \partial_x \tu\big) + \mathcal{H}\big(\tv \partial_x \tv\big) + \beta \,\mathcal{H}\tv - \mu\, \partial^2_x \tv = 0, \qquad \tv(0,\cdot)=\mathcal{H}\tu_0. \]
Therefore, we will study  both $\tu(t,x)$ and $\tv(t,x)$ as solutions of the following coupled system:
\begin{equation}\label{System}
\begin{cases}\vspace{2mm}
\partial_t \tu + \tu\, \partial_x \tu +  \tv\, \partial_x \tv + \beta \,\mathcal{H}\tu - \mu\, \partial^2_x \tu = 0,\\  \vspace{2mm}
\partial_t \tv + \mathcal{H}\big( \tu \, \partial_x \tu\big) + \mathcal{H}\big(\tv\, \partial_x \tv\big) + \beta \,\mathcal{H}\tv - \mu\, \partial^2_x \tv = 0, \\
\tu(0,\cdot)=\tu_0, \quad \tv(0,\cdot)= \mathcal{H}\tu_0.
\end{cases}
\end{equation}

Let   $T>0$  a  time  to be fixed later. For $s>\frac{3}{2}$, consider the Banach space 
\begin{equation*}
    E_T = \big\{ w \in \mathcal{C}([0,T], H^s(\R)): \, \| w \|_{E_T}<+\infty \big\},
\end{equation*}
endowed with the norm
\begin{equation*}
    \| w \|_{E_T}:= \sup_{0\leq t \leq T}\| w(t,\cdot)\|_{H^s}+ \sup_{0<t\leq T} t^{\frac{1}{2}}\left\| (1+|x|)^{\min(1,\gamma)}\,w(t,\cdot)\right\|_{L^\infty}. 
\end{equation*}
Here, the second term characterizes the pointwise decay in the spatial variable of the function $w(t,x)$. The time weight $t^{\frac{1}{2}}$ is included for technical reasons to ensure the validity of our subsequent estimates.

\medskip

\begin{Proposition}\label{Prop-Tech-1} Assume that $\tu_0$ satisfies (\ref{Def-Data-0}) and (\ref{Def-Data}). There exists a time $\tilde{T}_{0}=\tilde{T}_{0}(\tu_0)>0$ and a solution $(\tu,\tv)\in E_{\tilde{T}_{0}}$ to the coupled system (\ref{System}). In particular, for any $0<t\leq \tilde{T}_{0}$ and $x\in \R$, this solution verifies the pointwise estimate:
	\begin{equation}\label{Decay-Sol-System}
	|\tu(t,x)|+|\tv(t,x)| \leq \frac{\| \tu \|_{E_{\tilde{T}_{0}}}+\| \tv \|_{E_{\tilde{T}_{0}}}}{t^{\frac{1}{2}}(1+|x|)^{\min(1,\gamma)}}.
	\end{equation}
\end{Proposition}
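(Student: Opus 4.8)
The plan is a contraction-mapping argument for the Duhamel (mild) formulation of system (\ref{System}), run directly in a ball of $E_{T}\times E_{T}$ for a time $T=\tilde T_{0}$ to be taken small. Since the Fourier symbol $e^{-\mu\xi^{2}t+i\beta\,\sign(\xi)t}$ of the kernel $K(t,\cdot)$ of Section \ref{Sec:Kernel} generates the linear part $\partial_{t}+\beta\mathcal H-\mu\partial_{x}^{2}$ of \emph{both} equations of (\ref{System}), and since $K$ and $\mathcal H$ are Fourier multipliers (so that $K(t,\cdot)*\mathcal H\varphi=\mathcal H K(t,\cdot)*\varphi$), writing the nonlinearities in divergence form $\tu\,\partial_x\tu=\tfrac12\partial_x(\tu^2)$ and transferring the $x$-derivative onto the kernel turns (\ref{System}) into the fixed-point problem
\[
\tu=K(t,\cdot)*\tu_{0}-\tfrac12\int_{0}^{t}\partial_{x}K(t-\tau,\cdot)*\big(\tu^{2}+\tv^{2}\big)(\tau)\,d\tau,
\]
\[
\tv=\mathcal H K(t,\cdot)*\tu_{0}-\tfrac12\int_{0}^{t}\partial_{x}\mathcal H K(t-\tau,\cdot)*\big(\tu^{2}+\tv^{2}\big)(\tau)\,d\tau .
\]
I would let $\Phi=(\Phi_{1},\Phi_{2})$ be the right-hand sides and look for a fixed point in the closed ball $\overline{B}_{E_{T}\times E_{T}}(0,R)$ with $R$ of the order of the $E_{T}$-norm of the linear part.

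For the \emph{linear terms} I would control the $H^{s}$-part of $\|\cdot\|_{E_{T}}$ by $\|\widehat K(t,\cdot)\|_{L^{\infty}}\le1$ and the boundedness of $\mathcal H$ on $H^{s}$, giving $\|K(t,\cdot)*\tu_{0}\|_{H^{s}}+\|\mathcal H K(t,\cdot)*\tu_{0}\|_{H^{s}}\le C\|\tu_{0}\|_{H^{s}}$; and the weighted $L^{\infty}$-part via Proposition \ref{Prop-Kernel-Estim}, hypothesis (\ref{Def-Data}), and a convolution inequality of the form $\big(\langle\cdot\rangle^{-a}*\langle\cdot\rangle^{-b}\big)(x)\le C\,\langle x\rangle^{-\min(a,b,a+b-1)}$ (with a harmless logarithm when the minimum is attained twice), applied with $a=1$, $b=1+\gamma>1$: this yields $|K(t,\cdot)*\tu_{0}(x)|+|\mathcal H K(t,\cdot)*\tu_{0}(x)|\le C\,\tfrac{\eta(t)}{t^{1/2}}\,C_{0}\,\langle x\rangle^{-1}\le C\,\tfrac{\eta(t)}{t^{1/2}}\,C_{0}\,\langle x\rangle^{-\min(1,\gamma)}$, so that multiplying by the weight $t^{1/2}$ keeps this term \emph{bounded} (but not small) on $[0,T]$.

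For the \emph{nonlinear terms} I would, first, close the $H^{s}$-part using the algebra property $\|\tu^{2}\|_{H^{s}}\le C\|\tu\|_{H^{s}}^{2}$ ($s>\tfrac12$), the parabolic-type smoothing $\|\partial_{x}K(t-\tau,\cdot)*g\|_{H^{s}}\le C(t-\tau)^{-1/2}\|g\|_{H^{s}}$ (and likewise for $\partial_x\mathcal H K$), and $\int_{0}^{t}(t-\tau)^{-1/2}d\tau=2t^{1/2}$, producing a bound $C\,T^{1/2}\big(\|\tu\|_{E_{T}}^{2}+\|\tv\|_{E_{T}}^{2}\big)$. Second — and here is the delicate point — I would estimate the weighted $L^{\infty}$-part of the Duhamel term: for $|x|\le1$ the weight is harmless and one reuses the $H^{s}$-bound together with $H^{s}(\R)\hookrightarrow L^{\infty}(\R)$; for $|x|>1$ one uses a refined pointwise estimate for $\partial_{x}K$ and $\partial_{x}\mathcal H K$, obtained by iterating the integration-by-parts scheme of Proposition \ref{Prop-Kernel-Estim}, e.g. of the type $|\partial_{x}K(\sigma,z)|\le C(\sigma+z^{2})^{-1}$ for $0<\sigma\le1$ (so that $\partial_{x}K(\sigma,\cdot)$ decays like $\langle z\rangle^{-2}$ and $\|\partial_{x}K(\sigma,\cdot)\|_{L^{1}}\le C\sigma^{-1/2}$), combined with the interpolated pointwise bound $|\tu^{2}(\tau,y)|\le C\|\tu\|_{E_{T}}^{2}\,\tau^{-1/2}\langle y\rangle^{-\min(1,\gamma)}$ (interpolating $\|\tu(\tau)\|_{L^{\infty}}\lesssim\|\tu(\tau)\|_{H^{s}}$ against the weighted bound contained in $\|\tu\|_{E_{T}}$), the convolution inequality above — now applied with the exponent $2$, so that \emph{no} logarithmic loss occurs and the decay $\langle x\rangle^{-\min(1,\gamma)}$ is recovered — and the Beta-integral $\int_{0}^{t}(t-\tau)^{-1/2}\tau^{-1/2}\,d\tau=\pi$, after a near-field/far-field splitting of the spatial convolution around $y=x$. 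The point of the explicit time weight $t^{1/2}$ in $\|\cdot\|_{E_{T}}$ is that it converts every nonlinear contribution into one carrying a positive power of $T$.

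Collecting these bounds, $\Phi$ maps $\overline{B}_{E_{T}\times E_{T}}(0,R)$ into itself and is a contraction there once $T=\tilde T_{0}$ is small enough (and $R\sim\|\tu_{0}\|_{H^{s}}+C_{0}$); its unique fixed point is the solution $(\tu,\tv)\in E_{\tilde T_{0}}$, and the pointwise bound (\ref{Decay-Sol-System}) is then just the definition of the second term of $\|\cdot\|_{E_{T}}$. I expect the main obstacle to be precisely the weighted $L^{\infty}$ estimate on the Duhamel term: by Remark \ref{Rmk-1} the kernel $K$ itself only decays like $\langle x\rangle^{-1}$, so convolving it against data or a nonlinearity decaying like $\langle x\rangle^{-\min(1,\gamma)}$ loses a logarithm; one must therefore exploit the divergence structure to move the derivative onto the kernel, use the faster ($\langle x\rangle^{-2}$, hence $L^{1}$) decay of $\partial_{x}K$ and $\partial_x\mathcal H K$, and at the same time keep the time singularity $(t-\tau)^{-1/2}$ integrable — which is exactly what forces the interpolation of the pointwise bound on $\tu^{2},\tv^{2}$ and the careful spatial splitting.
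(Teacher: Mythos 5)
Your proposal is correct in outline and follows the same global strategy as the paper: Picard iteration for the mild formulation of (\ref{System}) in the space $E_T$, with the linear term handled by the near/far splitting $|y|\le|x|/2$ versus $|y|>|x|/2$ (your convolution inequality $\langle\cdot\rangle^{-1}\ast\langle\cdot\rangle^{-1-\gamma}\lesssim\langle x\rangle^{-1}$ is exactly what that splitting proves, and is even slightly sharper than the paper's bound $\langle x\rangle^{-\min(1,\gamma)}$, which in the far region simply pulls out $\langle x\rangle^{-\gamma}$ and closes with H\"older against $\|K(t,\cdot)\|_{L^p}$). Where you genuinely diverge is the weighted $L^\infty$ estimate of the Duhamel term. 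You put the nonlinearity in divergence form, move $\partial_x$ onto the kernel, and rely on the second-order decay $|\partial_xK(\sigma,z)|\lesssim(\sigma+z^2)^{-1}$ (hence $\|\partial_xK(\sigma,\cdot)\|_{L^1}\lesssim\sigma^{-1/2}$), together with a pointwise bound on $\tu^2$ obtained by interpolating $L^\infty$ against the weighted norm. This works, but it forces you to prove the estimate $|\partial_xK|\lesssim x^{-2}$ — which the paper establishes only in Appendix \ref{AppendixA}, for the purposes of Proposition \ref{Prop-Main-1} — and also its analogue for $\partial_x\mathcal HK$, where the boundary terms at $\xi=0^\pm$ in the integration by parts no longer vanish and must be tracked. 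The paper avoids all of this for Proposition \ref{Prop-Tech-1}: it keeps the nonlinearity as $\tu\,\partial_x\tu$, places $\partial_x\tu$ in $L^2$ via Cauchy--Schwarz (controlled by $H^s$), and extracts the spatial decay either from the kernel bound $|K|\lesssim\eta(t)t^{-1/2}\langle x\rangle^{-1}$ (near region, estimate (\ref{Estim-Tech-Nonlin-1})) or from the solution itself through the second component of the $E_T$-norm (far region, estimate (\ref{Estim-Tech-Nonlin-2}), using only $\|K(t-\tau,\cdot)\|_{L^2}\lesssim\eta(T)(t-\tau)^{-1/2}$ and the Beta integral). So your stated concern that one \emph{must} exploit the divergence structure to avoid a logarithmic loss is not quite accurate — the asymmetric treatment of the two factors $\tu$ and $\partial_x\tu$ sidesteps the issue without ever convolving two slowly decaying functions — but your route closes the fixed point all the same, at the price of the extra kernel estimates.
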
	
\begin{proof} We observe that system \eqref{System} can be equivalently reformulated as the following fixed-point problem:
	\begin{equation}\label{System-Mild}
	\begin{cases}\vspace{2mm}
	\tu(t,\cdot)= \ds{K(t,\cdot)\ast \tu_0 - \int_{0}^{t} K(t-\tau,\cdot)\ast \big( \tu\, \partial_x \tu + \tv\, \partial_x \tv \big)(\tau,\cdot)d \tau}, \\
	\tv(t,\cdot)=\ds{ \mathcal{H}K(t,\cdot)\ast \tu_0 - \int_{0}^{t} \mathcal{H}K(t-\tau,\cdot)\ast \big( \tu\, \partial_x \tu + \tv\, \partial_x \tv \big) (\tau,\cdot)d \tau}.
	\end{cases}
	\end{equation}
	In the second equation, the Hilbert transform is applied to the kernel $K(t,\cdot)$, defined in expression (\ref{Kernel}).  We will therefore solve this system in the Banach space $E_T$ introduced above.
	
	\medskip

In the following technical lemmas, we study the linear and nonlinear terms separately. For the linear term involving the initial data, note that by \eqref{Def-Data}, since $\gamma > 0$, we have $\tu_0 \in L^1(\R)$.

\begin{Lemme} Under assumptions \eqref{Def-Data-0} and \eqref{Def-Data}, the following estimate holds:
	\begin{equation}\label{Estim-Linear-LWP}
	\|K(t,\cdot)\ast \tu_0\|_{E_T} + \|\mathcal{H}K(t,\cdot)\ast \tu_0\|_{E_T} \,\leq\, C\,\eta(T)\big(\|\tu_0\|_{H^s} + \|\tu_0\|_{L^1} + C_0\big),
	\end{equation}
	where $C>0$ is a constant depending on the parameters $\beta,\gamma,\mu$, and $\eta(T)>0$ is given in \eqref{Estim-Ker-1}.
\end{Lemme}	
\begin{proof}
From (\ref{Kernel}), for any $t>0$ and $\xi \in \R$ we have $|\widehat{K}(t,\xi)|\leq 1$. Consequently, the first term in the norm $\| \cdot \|_{E_T}$ is directly estimated as:
\begin{equation}\label{Estim-Data-Hs}
\sup_{0\leq t \leq T}\big(\| K(t,\cdot)\ast \tu_0 \|_{H^s}+\|\mathcal{H} K(t,\cdot)\ast \tu_0 \|_{H^s}\big)\leq C\,\| \tu_0 \|_{H^s}.
\end{equation}	

We focus on the second term in $\| \cdot \|_{E_T}$. For the expression  $K(t,\cdot)\ast \tu_0$, for any fixed $x\neq 0$, we write
\begin{equation*}
|K(t,\cdot)\ast \tu_0(x)|\leq \int_{\R} |K(t,x-y)||\tu_0(y)|dy = \underbrace{ \int_{|y|\leq \frac{|x|}{2}}|K(t,x-y)||\tu_0(y)|dy}_{I_1(t,x)} + \underbrace{ \int_{|y|> \frac{|x|}{2}}|K(t,x-y)||\tu_0(y)|dy}_{I_2(t,x)}.  
\end{equation*}	 

For the term $I_1(t,x)$, since $|y|\leq \frac{|x|}{2}$, it follows that $|x-y|\geq |x|-|y| \geq \frac{|x|}{2}$. Then, using the estimate (\ref{Estim-Ker-1}), and since $0<t\leq T $, we obtain 
\begin{equation}\label{Estim-Tech-Kernel}
|K(t,x-y)| \leq C \frac{\eta(t)}{t^{\frac{1}{2}}}\frac{1}{|x-y|}\leq C\, \frac{\eta(T)}{t^{\frac{1}{2}}} \frac{1}{1+|x|}.
\end{equation}
Additionally,  recalling that $\tu_0 \in L^1(\R)$,  we have
\begin{equation}\label{Estim-Decay-Linear-1}
I_1(t,x) \leq \,  C \frac{\eta(T)}{t^{\frac{1}{2}}} \frac{1}{1+|x|}\int_{|y|\leq \frac{|x|}{2}}| \tu_0(y)| dy \leq C\frac{\eta(T)}{t^{\frac{1}{2}}} \frac{1}{1+|x|} \| \tu_0 \|_{L^1}\leq \, C\frac{\eta(T)}{t^{\frac{1}{2}}} \frac{1}{(1+|x|)^{\min(1,\gamma)}} \| \tu_0 \|_{L^1}.
\end{equation}

For the term $I_2(t,x)$,  since $|y|>\frac{|x|}{2}$,  by assumption (\ref{Def-Data})  we have 
\[ |\tu_0 (y)|\leq \frac{C_0}{(1+|y|)^{1+\gamma}}= \frac{C_0}{(1+|y|)(1+|y|)^\gamma}\leq \frac{C_0}{(1+|x|)^\gamma
}\frac{C}{1+|y|}. \]
Fixed a  parameter $1<q<+\infty$. Then  $\left\| \frac{1}{1+|y|}\right\|_{L^q}\leq C_q<+\infty$. Choose  $1<p<+\infty$ such that $1=1/p+1/q$. By (\ref{Estim-Ker-1}), for $0<t\leq T$, we obtain 
\begin{equation}\label{Estim-Kernel-Lp}
\| K(t,\cdot) \|_{L^p} \leq C \frac{\eta(T)}{t^{\frac{1}{2}}}\, \left\| \frac{1}{1+|x|}\right\|_{L^p} \leq  C_p \frac{\eta(T)}{t^{\frac{1}{2}}}.
\end{equation}
Thus, applying the H\"older inequality,  we deduce 
\begin{equation}\label{Estim-Decay-Linear-2}
\begin{split}
&\, I_2(t,x)\leq \, \frac{C_0}{(1+|x|)^\gamma} \int_{|y|>\frac{|x|}{2}} |K(t,x-y)| \frac{1}{1+|y|} dy \leq \, \frac{C_0}{(1+|x|)^\gamma}\, \| K(t,\cdot)\|_{L^p}\, \left\| \frac{1}{1+|y|}\right\|_{L^q} \\
 \leq &\,  \frac{C_0}{(1+|x|)^\gamma}\frac{C_{p,q}\, \eta(T)}{t^{\frac{1}{2}}} \leq  \frac{C_0}{(1+|x|)^{\min(1,\gamma)}}\frac{C\, \eta(T)}{t^{\frac{1}{2}}}.
\end{split}
\end{equation}

Gathering estimates (\ref{Estim-Decay-Linear-1}) and (\ref{Estim-Decay-Linear-2}), we obtain
\begin{equation}\label{Estim-Decay-Linear}
\sup_{0\leq t \leq T} t^{\frac{1}{2}}\left\| (1+|x|)^{\min(1,\gamma)} K(t,\cdot)\ast \tu_0 \right\|_{L^\infty}\leq C\, \eta(T) \big( \| \tu_0 \|_{L^1}+C_0 \big).
\end{equation}

Finally, the expression $\mathcal{H}K(t,\cdot)\ast \tu_0$, follows from the same arguments,  using the pointwise estimate for $\mathcal{H}K(t,x)$ given in (\ref{Estim-Ker-1}).
\end{proof}	

\begin{Lemme} The following estimates hold:
\begin{equation}\label{Estim-Nonlinear-LWP}
\begin{split}
&\, \left\|\int_{0}^{t} K(t-\tau,\cdot)\ast \big( \tu\, \partial_x \tu + \tv\, \partial_x \tv \big) (\tau,\cdot)d \tau \right\|_{E_T}+ \left\|\int_{0}^{t} \mathcal{H} K(t-\tau,\cdot)\ast \big( \tu\, \partial_x \tu + \tv\, \partial_x \tv \big)(\tau,\cdot)d \tau \right\|_{E_T}\\
\leq &\, C\, T^{\frac{1}{1}}\left( 1+  \eta(T)\left(1+T^{\frac{1}{2}}\right) \right) \, \big(\| \tu \|_{E_T}+ \| \tv \|_{E_T}\big)^2.
\end{split}
\end{equation}
	\end{Lemme}	
\begin{proof}
As before, it is enough to consider the expression	$\int_{0}^{t} K(t-\tau,\cdot)\ast \big( \tu\, \partial_x \tu + \tv\, \partial_x \tv \big) (\tau,\cdot)d \tau $. Using known estimates for   $\mathcal{H} K(t-\tau,\cdot)$, the expression $\int_{0}^{t} \mathcal{H} K(t-\tau,\cdot)\ast \big( \tu\, \partial_x \tu + \tv\, \partial_x \tv \big)(\tau,\cdot)d \tau$ follows the same arguments.

\medskip
	
 We start by studying the first term in the norm  $\| \cdot \|_{E_T}$, to this end, we will write $\tu\,\partial_x \tu$ as $\frac{1}{2}\partial_x (\tu^2)$.  First, observe that by (\ref{Kernel}), for any $t>0$ we  have $\ds{\left\|\, |\xi| \widehat{K}(t,\cdot)\right\|_{L^\infty}\leq \frac{C}{t^{\frac{1}{2}}}}$. Additionally, using  well-known properties of the Hilbert transform $\mathcal{H}$, along with the fact that $H^s(\R)$ is an  Banach algebra for $s>\frac{3}{2}$, it follows that:
	\begin{equation}\label{Estim-Nonlinear-Hs}
	\begin{split}
	 &\,\sup_{0\leq t \leq T}\left\| \frac{1}{2} \int_{0}^{t}K(t-\tau,\cdot)\ast \partial_x \big( \tu^2+ \tv^2\big)(\tau,\cdot)\, d \tau \right\|_{H^s} \\
	\leq & \, \sup_{0\leq t \leq T} \int_{0}^{t} \frac{C}{(t-\tau)^{\frac{1}{2}}}\, \big( \| \tu(\tau,\cdot)\|^2_{H^s}+\|\tv(\tau,\cdot)\|^2_{H^s}\big)\, d \tau\\
	\leq &\, C\, T^{\frac{1}{2}}\,  \sup_{0\leq t \leq T} \left(\| \tu(t,\cdot)\|^2_{H^s}+\| \tv(t,\cdot)\|^2_{H^s}\right)\\
	  \leq &\, C\, T^{\frac{1}{2}}(\| \tu\|_{E_T}+\| \tv \|_{E_T})^2. 
	\end{split}
	\end{equation}

Now,  we will focus on the second term in the norm $\| \cdot \|_{E_T}$.  For any fixed $x\neq 0$, we write
	\begin{equation}\label{Identity-Tech-Nonlin}
	\begin{split}
	\int_{0}^{t} K(t-\tau,\cdot)\ast \big( \tu\, \partial_x \tu + \tv\, \partial_x \tv \big) (\tau,x)d \tau =& \underbrace{ \int_{0}^{t} \int_{|y|\leq \frac{|x|}{2}}K(t-\tau, x-y) \big( \tu\, \partial_x \tu + \tv\, \partial_x \tv \big) (\tau,y) dy dt}_{I_1(t,x)} \\
	&\, +\underbrace{ \int_{0}^{t} \int_{|y|> \frac{|x|}{2}} K(t-\tau, x-y) \big( \tu\, \partial_x \tu + \tv\, \partial_x \tv \big) (\tau,y) dy dt}_{I_2(t,x)}.
	\end{split}
	\end{equation}
	where we will estimate each term separately.  
	
	\medskip
	
	For the  term $I_1(t,x)$, since $|y|\leq \frac{|x|}{2}$ and $0<t-\tau <T$, by  estimate (\ref{Estim-Tech-Kernel}) we have
	\[ | K(t-\tau,x-y)|\leq C \frac{\eta(T)}{(t-\tau)^{\frac{1}{2}}}\, \frac{1}{1+|x|}. \]
	Then, using the Cauchy-Schwarz inequality, the continuous embeddings $H^s(\R)\subset H^1(\R)\subset L^2(\R)$ and the fact that $0<t\leq T$,   we find
	\begin{equation}\label{Estim-Tech-Nonlin-1}
	\begin{split}
	I_1(t,x) \leq &\, C\frac{\eta(T)}{1+|x|}\, \int_{0}^{t}\frac{1}{(t-\tau)^{\frac{1}{2}}}\, \int_{|y|\leq \frac{|x|}{2}} \big( | \tu(\tau,y)||\partial_y \tu(\tau,y)|+| \tv(\tau,y)||\partial_y \tv(\tau,y)| \big)dy d\tau \\
	\leq &\,C \frac{\eta(T)}{1+|x|}\, \int_{0}^{t}\frac{1}{(t-\tau)^{\frac{1}{2}}}\, \int_{\R} \big( | \tu(\tau,y)||\partial_y \tu(\tau,y)|+| \tv(\tau,y)||\partial_y \tv(\tau,y)| \big)dy d\tau\\
	\leq &\, C\frac{\eta(T)}{1+|x|}\, \int_{0}^{t}\frac{1}{(t-\tau)^{\frac{1}{2}}}\, \big(\| \tu(\tau,\cdot)\|_{L^2}\, \| \tu (\tau,\cdot)\|_{H^s}+\| \tv(\tau,\cdot)\|_{L^2}\, \| \tv (\tau,\cdot)\|_{H^s} \big)\, d\tau \\
	\leq &\, C \frac{\eta(T)}{1+|x|}\, \left(  \int_{0}^{t}\frac{d \tau }{(t-\tau)^{\frac{1}{2}}} \right)  \sup_{0\leq \tau \leq t}  \big(\| \tu(\tau,\cdot)\|^2_{H^s}+ \| \tv (\tau,\cdot)\|^2_{H^s} \big)\\
	\leq &\, C\frac{\eta(T)}{1+|x|}\, T^{\frac{1}{2}}\, \big( \|\tu\|^2_{E_T}+ \|\tv\|^2_{E_T}\big)\\
	\leq &\, C\frac{\eta(T)T^{\frac{1}{2}}}{(1+|x|)^{\min(1,\gamma)}} \big( \|\tu\|_{E_T}+ \|\tv\|_{E_T}\big)^2.
	\end{split}
	\end{equation}
	
	For the  term $I_2(t,x)$,   since $\tu,\tv \in E_T$ and $|y|>\frac{|x|}{2}$,  we have 
	\[ |\tu(\tau,y)|  \leq  \frac{1}{\tau^{\frac{1}{2}}}\,\frac{1}{(1+|y|)^{\min(1,\gamma)}}\, \| \tu \|_{E_T}   \leq  \frac{C}{\tau^{\frac{1}{2}}}\,\frac{1}{(1+|x|)^{\min(1,\gamma)}}\, \| \tu \|_{E_T},   \]
	and similarly
	\[ |\tv(\tau,y)|  \leq  \frac{C}{\tau^{\frac{1}{2}}}\,\frac{1}{(1+|x|)^{\min(1,\gamma)}}\, \| \tv \|_{E_T}. \]
	Additionally,  note that by the first estimate in (\ref{Estim-Ker-1}), and since $0<t-\tau<T$, it follows that 
	\[ \| K(t-\tau,\cdot)\|_{L^2}\leq C\frac{\eta(T)}{(t-\tau)^{\frac{1}{2}}}.  \] 
	Therefore, using again the Cauchy-Schwarz inequality and the fact that $\ds{\int_{0}^{t}\frac{d \tau}{\tau^{\frac{1}{2}}(t-\tau)^{\frac{1}{2}}}\leq C<+\infty}$,  we can write
	\begin{equation}\label{Estim-Tech-Nonlin-2}
	\begin{split}
	I_2(t,x) \leq &\, C \frac{\eta(T)}{(1+|x|)^{\min(1,\gamma)}}\, \| \tu \|_{E_T} \int_{0}^{t} \frac{1}{\tau^{\frac{1}{2}}}\, \int_{\R} | K(x-y,t-\tau)||\partial_y \tu(\tau,y)| dy d\tau \\
	&\, + C\frac{\eta(T)}{(1+|x|)^{\min(1,\gamma)}}\, \| \tv \|_{E_T} \int_{0}^{t} \frac{1}{\tau^{\frac{1}{2}}}\, \int_{\R} | K(x-y,t-\tau)||\partial_y \tv(\tau,y)| dy d\tau \\
	\leq &\,C \frac{\eta(T)}{(1+|x|)^{\min(1,\gamma)}}\, \| \tu \|_{E_T} \int_{0}^{t}\frac{1}{\tau^{\frac{1}{2}}(t-\tau)^{\frac{1}{2}}}\| \tu (\tau,\cdot)\|_{H^1}d \tau \\
	&\, + C\frac{\eta(T)}{(1+|x|)^{\min(1,\gamma)}}\, \| \tv \|_{E_T} \int_{0}^{t}\frac{1}{\tau^{\frac{1}{2}}(t-\tau)^{\frac{1}{2}}}\| \tv (\tau,\cdot)\|_{H^1}d \tau \\
	\leq &\,C \frac{\eta(T)}{(1+|x|)^{\min(1,\gamma)}}  \big( \|\tu\|_{E_T}+ \|\tv\|_{E_T}\big)^2. 
	\end{split}
	\end{equation}
	
Gathering estimates (\ref{Estim-Tech-Nonlin-1}) and (\ref{Estim-Tech-Nonlin-2}) into identity (\ref{Identity-Tech-Nonlin}), it follows that 
\begin{equation*}
\begin{split}
 &\sup_{0\leq t \leq T}t^{\frac{1}{2}} \left\|(1+|x|)^{\min(1,\gamma)}\int_{0}^{t} K(t-\tau,\cdot)\ast \big( \tu\, \partial_x \tu + \tv\, \partial_x \tv \big) (\tau,\cdot)d \tau \right\|_{L^\infty}\\
 \leq  &\, C T^{\frac{1}{2}}\, \eta(T)(1+T^{\frac{1}{2}}) \big(\| \tu \|_{E_T}+ \| \tv \|_{E_T}\big)^2. 
\end{split}
\end{equation*}	
 \end{proof}

Once estimates \eqref{Estim-Linear-LWP} and \eqref{Estim-Nonlinear-LWP} have been established, we choose a time $\tilde{T}_0 \leq 1$. From the expression of $\eta(\tilde{T}_0)$ in \eqref{Estim-Ker-1}, it follows that $\eta(\tilde{T}_0) \leq C$ and $1+  \eta(\tilde{T}_{0})\left(1+\tilde{T}^{\frac{1}{2}}_{0}\right)\leq C$, for some numerical constant $C>0$. Consequently, estimates \eqref{Estim-Linear-LWP} and \eqref{Estim-Nonlinear-LWP} take the form
\[ 	\|K(t,\cdot)\ast \tu_0\|_{E_{\tilde{T}_0}} + \|\mathcal{H}K(t,\cdot)\ast \tu_0\|_{E_{\tilde{T}_0}} \,\leq\, C\,\big(\|\tu_0\|_{H^s} + \|\tu_0\|_{L^1} + C_0\big),\]
\begin{equation*}
\begin{split}
&\, \left\|\int_{0}^{t} K(t-\tau,\cdot)\ast \big( \tu\, \partial_x \tu + \tv\, \partial_x \tv \big) (\tau,\cdot)d \tau \right\|_{E_{\tilde{T}_0}}+ \left\|\int_{0}^{t} \mathcal{H} K(t-\tau,\cdot)\ast \big( \tu\, \partial_x \tu + \tv\, \partial_x \tv \big)(\tau,\cdot)d \tau \right\|_{E_{\tilde{T}_0}}\\
\leq &\, C\, \tilde{T}^{\frac{1}{2}}_{0} \, \big(\| \tu \|_{E_T}+ \| \tv \|_{E_T}\big)^2.
\end{split}
\end{equation*}
Next,  we choose  $\tilde{T}_0$ sufficiently small, namely
\[ \tilde{T}_0 < \frac{1}{(4 C \big(\| \tu_0 \|_{H^s}+\| \tu_0 \|_{L^1}+ C_0 \big) )^2}, \]
so that the Picard iteration scheme yields a solution $(\tu,\tv) \in E_{\tilde{T}_0}$ to system \eqref{System-Mild}. This completes the proof of Proposition \ref{Prop-Tech-1}. \end{proof} 

 \medskip
 
{\bf Second step: uniqueness of solutions of the system (\ref{System})}.
\begin{Proposition}\label{Prop-Uniqueness} For any fixed time $0<T<+\infty$, the system (\ref{System})  admits a unique solution in the space $\mathcal{C}([0,T], H^s(\R))$, with $s>\frac{3}{2}$. 
\end{Proposition}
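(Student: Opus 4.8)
## Proof plan for Proposition 2.3 (uniqueness)

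The plan is to argue by a Gronwall-type energy estimate at the regularity level $H^{s-1}$ (or even $L^2$), exploiting that the difference of two solutions lives in a fixed ball of $\mathcal{C}([0,T],H^s)$ and that $H^s(\R)$ with $s>\tfrac32$ embeds into $W^{1,\infty}(\R)$. Suppose $(\tu_1,\tv_1)$ and $(\tu_2,\tv_2)$ are two solutions of \eqref{System} in $\mathcal{C}([0,T],H^s(\R))$ with the same initial datum, and set $w:=\tu_1-\tu_2$, $z:=\tv_1-\tv_2$. Subtracting the two copies of the system, $w$ and $z$ satisfy a linear system with source terms of the schematic form $\tu_1\partial_x w + w\,\partial_x\tu_2$, $\tv_1\partial_x z + z\,\partial_x\tv_2$ (and their Hilbert transforms in the $z$-equation), plus the skew-adjoint terms $\beta\mathcal{H}$ and the dissipative term $-\mu\partial_x^2$.

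First I would fix the functional setting: work in $L^2(\R)$, multiply the $w$-equation by $w$ and the $z$-equation by $z$, integrate in $x$, and add. The dissipative term contributes $-\mu\|\partial_x w\|_{L^2}^2-\mu\|\partial_x z\|_{L^2}^2\le 0$, which can simply be discarded (we do not even need it). The linear Coriolis terms $\beta\int (\mathcal{H}w)\,w\,dx$ and $\beta\int \mathcal{H}(\cdots)\,z\,dx$ are handled using that $\mathcal{H}$ is skew-adjoint and an isometry on $L^2$: the first vanishes, and the ones in the $z$-equation are bounded by moving $\mathcal{H}$ off $z$ via $\int \mathcal{H}(g)\,z = -\int g\,\mathcal{H}z$ and using $\|\mathcal{H}z\|_{L^2}=\|z\|_{L^2}$. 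For the nonlinear terms, the standard trick applies: in $\int (\tu_1\partial_x w)\,w\,dx = -\tfrac12\int (\partial_x\tu_1)\,w^2\,dx$ one integrates by parts to remove the derivative from $w$, bounding it by $\|\partial_x\tu_1\|_{L^\infty}\|w\|_{L^2}^2$; in $\int (w\,\partial_x\tu_2)\,w\,dx$ one bounds directly by $\|\partial_x\tu_2\|_{L^\infty}\|w\|_{L^2}^2$. The terms involving $\mathcal{H}$ in the $z$-equation, e.g. $\int \mathcal{H}(\tu_1\partial_x w)\,z\,dx$, are dealt with by $L^2$-boundedness of $\mathcal{H}$ combined with $\|\tu_1\partial_x w\|_{L^2}\lesssim \|\tu_1\|_{L^\infty}\|\partial_x w\|_{L^2}$ — but this reintroduces a derivative on $w$, so one either absorbs it using the dissipation $-\mu\|\partial_x w\|_{L^2}^2$ (Young's inequality, since $\mu>0$), or one first rewrites $\tu_1\partial_x w = \partial_x(\tu_1 w) - (\partial_x\tu_1)w$ and uses that $\mathcal{H}$ commutes with $\partial_x$ so $\mathcal{H}\partial_x(\tu_1 w)$ pairs against $z$ as $-\int \tu_1 w\,\mathcal{H}\partial_x z$, leaving a $\partial_x z$ that is again absorbed by the dissipation. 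Either way, one arrives at
\begin{equation*}
\frac{d}{dt}\big(\|w(t,\cdot)\|_{L^2}^2+\|z(t,\cdot)\|_{L^2}^2\big)\le C(t)\big(\|w(t,\cdot)\|_{L^2}^2+\|z(t,\cdot)\|_{L^2}^2\big),
\end{equation*}
with $C(t):=C\big(1+\|\tu_1\|_{H^s}^2+\|\tu_2\|_{H^s}^2+\|\tv_1\|_{H^s}^2+\|\tv_2\|_{H^s}^2\big)$, finite and integrable on $[0,T]$ because the four solutions lie in $\mathcal{C}([0,T],H^s)$ and $H^s\hookrightarrow W^{1,\infty}$ for $s>\tfrac32$. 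Since $w(0,\cdot)=z(0,\cdot)=0$, Gronwall's lemma forces $w\equiv z\equiv 0$ on $[0,T]$, giving uniqueness.

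The main obstacle is the bookkeeping of the derivative loss coming from the Hilbert transform in the second equation: unlike the first equation, $\int \mathcal{H}(\tu_1\partial_x w)\,z\,dx$ cannot be closed purely at the $L^2$ level without either exploiting the parabolic smoothing $-\mu\partial_x^2$ (Young's inequality to absorb $\varepsilon\|\partial_x w\|_{L^2}^2$) or commuting $\partial_x$ through $\mathcal{H}$ to transfer the derivative onto the test function $z$ and then absorbing $\|\partial_x z\|_{L^2}^2$. I expect the cleanest route is to keep the dissipative terms in the energy identity from the start and absorb all the "bad" derivative terms into $-\mu\|\partial_x w\|_{L^2}^2-\mu\|\partial_x z\|_{L^2}^2$; a secondary, purely technical point is justifying the integrations by parts and the differentiation under the time integral, which is legitimate since $s>\tfrac32$ gives enough regularity (one may first prove the estimate for smooth solutions and pass to the limit, or note that $\tu_i,\tv_i\in\mathcal{C}_tH^s_x$ together with the equation gives $\partial_t\tu_i,\partial_t\tv_i\in\mathcal{C}_tH^{s-2}_x$, so the pairing $\langle\partial_t w,w\rangle$ makes sense and equals $\tfrac12\frac{d}{dt}\|w\|_{L^2}^2$).
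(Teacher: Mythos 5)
Your proposal is correct and follows essentially the same route as the paper: an $L^2$ energy estimate for the difference of two solutions, with the transport-type terms controlled via $\|\partial_x\tu_i\|_{L^\infty},\|\partial_x\tv_i\|_{L^\infty}\lesssim\|\cdot\|_{H^s}$ (Sobolev embedding for $s>\tfrac32$), the derivative-loss terms absorbed into the dissipation $-\mu\|\partial_x\cdot\|_{L^2}^2$ by Young's inequality, the $L^2$-boundedness/skew-adjointness of $\mathcal{H}$, and Grönwall. The paper organizes the nonlinear contributions into the two generic families $\int(\mathbf{f}\,\partial_x h)\mathbf{g}\,dx$ and $\int(h\,\partial_x\mathbf{f})\mathbf{g}\,dx$ rather than integrating by parts on the diagonal terms, but this is only a cosmetic difference.
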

\begin{proof}  Suppose $(\tu_1,\tv_1), (\tu_2,\tv_2) \in \mathcal{C}([0,T], H^s(\R))$ are two solutions corresponding to the same initial datum $\tu_0$, and define ${\bf u}:=\tu_1-\tu_2$, ${\bf v}:=\tv_1-\tv_2$.  Then ${\bf u}$ and ${\bf v}$ satisfy:
	\begin{equation*}
	\begin{cases}\vspace{2mm}
	\partial_t {\bf u} + {\bf u}\partial_x \tu_1 + \tu_2 \partial_x {\bf u} + {\bf v}\partial_x \tv_1 + \tv_2 \partial_x {\bf v} + \beta \mathcal{H} {\bf u}-\mu \partial^2_x {\bf u}  = 0, \\ 
	\partial_t {\bf v} + \mathcal{H}({\bf u}\partial_x \tu_1 + \tu_2 \partial_x {\bf u} + {\bf v}\partial_x \tv_1 + \tv_2 \partial_x {\bf v}) + \beta \mathcal{H} {\bf v}-\mu \partial^2_x {\bf v}  = 0.
	\end{cases} 
	\end{equation*}
	Hence,  we obtain:  
	\begin{equation*}
	\begin{split}
	\frac{1}{2}\frac{d}{dt}\| {\bf u}(t,\cdot)\|^2_{L^2} = &\, -\underbrace{\int_{\R} \big( {\bf u}\partial_x u_1\big){\bf u}dx}_{(A_1)}  -\underbrace{\int_{\R}\big( u_2 \partial_x {\bf u} \big) {\bf u}dx}_{(B_1)} - \underbrace{ \int_{\R} \big( {\bf v}\partial_x v_1\big) {\bf u} dx}_{(A_2)}-\underbrace{\int_{\R}\big( v_2 \partial_x {\bf v}\big)  {\bf u} dx}_{(B_2)} \\
	&\, - \mu \| {\bf u}(t,\cdot)\|^2_{\dot{H}^1}, 
	\end{split}
	\end{equation*}
	\begin{equation*}
	\begin{split}
	\frac{1}{2}\frac{d}{dt}\| {\bf v}(t,\cdot)\|^2_{L^2} = &\,  -\underbrace{\int_{\R}\mathcal{H}\big( {\bf u}\partial_x u_1\big){\bf v} dx}_{(A_3)}- \underbrace{\int_{\R}\mathcal{H}\big(u_2 \partial_x {\bf u} \big) {\bf v} dx}_{(B_3)} - \underbrace{\int_{\R}\mathcal{H} \big( {\bf v}\partial_x v_1\big){\bf v}dx}_{(A_4)} - \underbrace{\int_{\R}\mathcal{H}\big( v_2 \partial_x {\bf v}\big) {\bf v} dx}_{(B_4)}\\
	&\,  - \mu \| {\bf v}(t,\cdot)\|^2_{\dot{H}^1}.
		\end{split}
	\end{equation*}
Here, we need to estimate the mixed terms on the right-hand side. Some of them can be handled using similar arguments; therefore, we will only outline the main ideas. 
	
	\medskip
	
	The terms labeled $(A_i)$, for $i=1,\dots,4$, share the generic form $\ds{\int_{\R}({\bf f}\partial_x h){\bf g}dx}$,  with ${\bf f}, {\bf g} \in \{{\bf u}, {\bf v}, \mathcal{H}{\bf v}\}$ and $h \in \{\tu_1, \tv_1\}$. Using H\"older’s inequality, standard properties of the Hilbert transform, and the Sobolev embedding $H^{s-1}(\R) \subset L^\infty(\R)$ (since $s-1>\frac{1}{2}$), it follows that:
	\begin{equation*}
\left| \int_{\R} ({\bf f} \partial_x h){\bf g} dx \right| \leq \| {\bf f}\partial_x h \|_{L^2}\, \| {\bf g}\|_{L^2} \leq \| \partial_x h \|_{L^\infty}\, \|{\bf f}\|_{L^2}\, \| {\bf g }\|_{L^2}\leq C \| h \|_{H^s}(\| {\bf f}\|^2_{L^2}+\| {\bf g}\|^2_{L^2}).   
	\end{equation*}
Thus, we obtain:
\begin{equation}\label{Estim-(A)}
\sum_{i=1}^{4}|(A_i)|\leq C (\| \tu_1 \|_{H^s}+\| \tv_1 \|_{H^s})(\| {\bf u }\|^{2}_{L^2}+\| {\bf v}\|^{2}_{L^2}). 
\end{equation}	

The terms labeled  $(B_i)$ have the generic expression $\ds{\int_{\R}(h \partial_x {\bf f}){\bf g}dx}$, where ${\bf f}$ and  ${\bf g}$ are as above, and  $h\in \{\tu_2,\tv_2\}$. Using arguments analogous to the previous ones, we then write:
\begin{equation*}
\left| \int_{\R}(h \partial_x {\bf f}){\bf g}dx \right| \leq \| h \partial_x {\bf f} \|_{L^2}\, \| {\bf g }\|_{L^2} \leq \|  \partial_x {\bf f} \|_{L^2}\, \| h \|_{L^\infty}\| {\bf g }\|_{L^2} \leq C\, \|  {\bf f} \|_{\dot{H}^1}\, \| h \|_{H^s}\| {\bf g }\|_{L^2}.
\end{equation*}
Applying the discrete Young inequalities, it follows that 
\begin{equation*}
C\, \|   {\bf f} \|_{\dot{H}^1}\, \| h \|_{H^s}\| {\bf g }\|_{L^2} \leq \mu \| {\bf f} \|^2_{\dot{H}^1} + \frac{C}{\mu} \| h \|^2_{H^s}\| {\bf g}\|^{2}_{L^2}.
\end{equation*}
Thus, we obtain
\begin{equation}\label{Estim-(B)}
\sum_{i=1}^{4} |(B_i)| \leq \frac{C}{\mu}(\| \tu_2\|^{2}_{H^s}+\|\tv_2\|^{2}_{H^s})(\| {\bf u}\|^{2}_{L^2}+\| {\bf v }\|^{2}_{L^2})+ \mu \| {\bf u}\|^2_{\dot{H}^1}+ \mu \| {\bf v}\|^2_{\dot{H}^1}. 
\end{equation}

	Combining  estimates (\ref{Estim-(A)}) and (\ref{Estim-(B)}), and applying Gr\"onwall's inequality, we obtain
	\begin{equation*}
	\begin{split}
	\| {\bf {u}}(t,\cdot)\|^2_{L^2}+	\| {\bf {v}}(t,\cdot)\|^2_{L^2}\leq &\big( \| {\bf u}(0,\cdot)\|^2_{L^2}+\|{\bf v}(0,\cdot)\|^{2}_{L^2}\big)\times\\
	&\times\exp\left( C\, \int_{0}^{t}(\| \tu_1(\tau,\cdot)\|_{H^s}+\| \tv_1(\tau,\cdot)\|_{H^s}+\frac{1}{\mu}\| \tu_2(\tau,\cdot)\|^2_{H^s}+\frac{1}{\mu}\| \tv_2(\tau,\cdot)\|^2_{H^s})d \tau\right),
	\end{split}
	\end{equation*}
	where the  integral converges since $\tu_1, \tu_2, \tv_1, \tv_2 \in \mathcal{C}_t H^s_x$. Thus,  as ${\bf u}(0,\cdot)=0$ and ${\bf v}(0,\cdot)=0$, we conclude that $\tu_1=\tu_2$, and $\tv_1= \tv_2$. 
	\end{proof}	

\medskip

{\bf Step 3: End of the proof of Theorem \ref{Th-Main}}. Let $u_0$ be the initial datum satisfying assumptions \eqref{Def-Data-0} and \eqref{Def-Data}.

\medskip

On the one hand, using estimates \eqref{Estim-Data-Hs} and \eqref{Estim-Nonlinear-Hs} (where we write $\mathcal{H}u$ instead of $\tv$), together with the Picard iteration scheme applied to the fixed-point problem
\begin{equation}\label{Equation-Mild}
u(t,\cdot) = K(t,\cdot)\ast u_0 - \int_{0}^{t} K(t-\tau,\cdot)\ast \big( u\, \partial_x u + \mathcal{H}u\, \partial_x (\mathcal{H}u) \big)(\tau,\cdot)\, d\tau,
\end{equation}
we obtain the existence of a time $T'_0 = T'_0(\|u_0\|_{H^s})$, verifying
\begin{equation}\label{Existence-Time-Hs}
T'_0(v_0):= \frac{1}{2}\frac{1}{(4C \| u_0\|_{H^s} )^2},
\end{equation}
 and a solution $u \in \mathcal{C}([0,T'_0], H^s(\R))$ of equation \eqref{Main-Equation}.

\medskip

On the other hand, considering the same initial datum $u_0$, and defining $\tu_0 := u_0$, $\tv_0 := \mathcal{H}u_0$, by Proposition \ref{Prop-Tech-1} there exists $(\tu, \tv) \in E_{\tilde{T}0}$, a solution of system \eqref{System}. Moreover, by the continuous embedding $E_{\tilde{T}_0} \subset \mathcal{C}([0, \tilde{T}_0], H^s(\R))$, we have $(\tu, \tv) \in \mathcal{C}([0, \tilde{T}_0], H^s(\R))$.

\medskip

Finally, observe that $(u, \mathcal{H}u) \in \mathcal{C}([0, T'_0], H^s(\R))$ is also a solution of system \eqref{System}. Defining
\[ T_0 := \min \left(\tilde{T}_0, T'_{0}\right), \]
it follows from Proposition \ref{Prop-Uniqueness} that $\tu = u$ and $\tv = \mathcal{H}u$ on the interval $[0, T_0]$. Since $\tu$ and $\tv$ satisfy the pointwise decay estimates \eqref{Decay-Sol-System}, the desired estimate \eqref{Decay-Solution} holds. This concludes the proof of Theorem \ref{Th-Main}.

\subsection{Proof of Proposition \ref{Prop-Main-1}} Arguing by contradiction, assume that (\ref{Decay-Solution-Fast}) and (\ref{Decay-Solution-Mean-Fast}) hold. Under these assumptions, for any fixed $0 < t \leq T_0$, we will prove that the solution $u(t,x)$ of equation (\ref{Main-Equation}) develops the following asymptotic profile with respect to the spatial variable $x$: 
\[ u(t,x)=\frac{1}{x} \Phi\big(M(u_0),t,u\big)+ o(t)\left(\frac{1}{|x|}\right), \qquad |x|\to +\infty, \]
where the expression $\Phi\big(M(u_0),t,u\big)$, defined in formula (\ref{Phi}) below, does not depend on the variable $x$. This asymptotic profile yields a contradiction with the assumptions (\ref{Decay-Solution-Fast}) and (\ref{Decay-Solution-Mean-Fast}).

\medskip

Returning to the mild formulation  (\ref{Equation-Mild}), we analyze its linear and nonlinear parts separately.

\begin{Lemme}For the linear part, assuming (\ref{Mass-Initial-Datum}),  for any $t>0$ the following asymptotic profile holds:
	\begin{equation}\label{Profile-Linear}
	K(t,\cdot)\ast u_0 (x) = - \frac{\sin(\beta t)}{x}\, M(u_0)  +o(t)\left(\frac{1}{|x|}\right), \qquad |x| > 1,
	\end{equation}
	where the quantity $M(u_0)\neq 0$ is defined in expression (\ref{Mass-Initial-Datum}).
\end{Lemme}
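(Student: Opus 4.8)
The plan is to start from the asymptotic profile for the kernel itself, namely identity (\ref{Profile-Kernel}): for $|x|>1$ and $t>0$ we have $K(t,x) = -\sin(\beta t)/x + I(t,x)$, where $I(t,x) \le C\, t^{1/2}/x^2$ from (\ref{Estim-I}). Plugging this into the convolution, I would split
\[
K(t,\cdot)\ast u_0(x) = \int_\R K(t,x-y)\,u_0(y)\,dy
\]
into the region $|y|\le |x|/2$ and the region $|y|>|x|/2$, exactly as in the proof of the linear estimate (\ref{Estim-Linear-LWP}). On the inner region $|y|\le|x|/2$ we have $|x-y|\ge|x|/2>1$, so the pointwise profile for $K$ applies: $K(t,x-y) = -\sin(\beta t)/(x-y) + I(t,x-y)$ with $|I(t,x-y)|\le C t^{1/2}/|x-y|^2 \le C t^{1/2}/|x|^2$. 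The contribution of the $I$-term over $|y|\le|x|/2$ is then bounded by $C t^{1/2}\|u_0\|_{L^1}/|x|^2 = o(t)(1/|x|)$ (using $u_0\in L^1$, which holds since $\gamma>0$). The tail region $|y|>|x|/2$ is handled as in (\ref{Estim-Decay-Linear-2}): using the decay hypothesis (\ref{Def-Data}) with $\gamma>1$ and Hölder's inequality against $\|K(t,\cdot)\|_{L^p}\le C\eta(T)/t^{1/2}$, this whole piece is $O(1/|x|^\gamma) = o(1/|x|)$ since $\gamma>1$.

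The remaining main term is $-\sin(\beta t)\int_{|y|\le|x|/2} u_0(y)/(x-y)\,dy$. Here I would write $\tfrac{1}{x-y} = \tfrac{1}{x} + \tfrac{y}{x(x-y)}$. The first piece gives $-\tfrac{\sin(\beta t)}{x}\int_{|y|\le|x|/2}u_0(y)\,dy = -\tfrac{\sin(\beta t)}{x}\big(M(u_0) - \int_{|y|>|x|/2}u_0(y)\,dy\big)$, and the correction $\int_{|y|>|x|/2}u_0(y)\,dy$ tends to $0$ as $|x|\to\infty$ (tail of an $L^1$ function), contributing $o(1/|x|)$. For the second piece, on $|y|\le|x|/2$ we have $|x-y|\ge|x|/2$, so $\big|\tfrac{y}{x(x-y)}\big|\le \tfrac{2|y|}{|x|^2}$, and $\int_\R |y|\,|u_0(y)|\,dy<\infty$ because (\ref{Def-Data}) with $\gamma>1$ forces $|y|\,|u_0(y)|\lesssim (1+|y|)^{-\gamma}$, integrable; hence this piece is $O(1/|x|^2)=o(1/|x|)$. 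Collecting all error terms yields (\ref{Profile-Linear}) with $M(u_0)$ as claimed.

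I expect no serious obstacle here; the only mild subtlety is bookkeeping — keeping track of which error terms carry a factor $t^{1/2}$ (from $I(t,x)$) versus which come from $L^1$-tails with no explicit $t$-dependence, so that the aggregate error is legitimately of the form $o(t)(1/|x|)$ as defined in the statement (i.e. $o(1/|x|)$ for each fixed $t$, with the relevant constants tracked through $\eta(T)$). The argument is essentially a refinement of the already-proven estimate (\ref{Estim-Linear-LWP}), extracting the leading-order term rather than merely bounding it.
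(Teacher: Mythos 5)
Your argument is correct, and it reaches \eqref{Profile-Linear} by a genuinely different (though closely related) route. The paper first centers the convolution, writing $K(t,\cdot)\ast u_0(x)=K(t,x)\,M(u_0)+\int_{\R}\big(K(t,x-y)-K(t,x)\big)u_0(y)\,dy$ as in \eqref{Decomposition}, and then controls the difference $K(t,x-y)-K(t,x)$ on the region $|y|\le |x|/2$ by a first-order Taylor expansion \eqref{Taylor} combined with the kernel derivative bound $|\partial_x K(t,x)|\le C/x^2$ of \eqref{Estim-Kernel-Derivative}, whose proof occupies Appendix \ref{AppendixA}. You instead split the convolution directly into $|y|\le|x|/2$ and $|y|>|x|/2$, substitute the explicit profile \eqref{Profile-Kernel} for $K(t,x-y)$ on the inner region, and extract the constant $M(u_0)$ via the algebraic identity $\frac{1}{x-y}=\frac{1}{x}+\frac{y}{x(x-y)}$, estimating the correction by $2|y|/|x|^2$ and the integrability of $|y|\,|u_0(y)|$ (which, like the paper's bound on $I_1$, is exactly where $\gamma>1$ enters). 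The tail region and the $L^1$-tail correction $\int_{|y|>|x|/2}u_0$ are treated identically to the paper's $I_2$ and $I_3$. What your route buys is that this lemma no longer needs the derivative estimate \eqref{Estim-Kernel-Derivative} at all, since you exploit that the singular part of the kernel is known in closed form; what the paper's route buys is a template (centering plus Taylor on the kernel increment) that is reused verbatim in the nonlinear lemma, where the source term $u\,\partial_y u$ has no pointwise decay and one must instead invoke the weighted $L^2$ hypothesis \eqref{Decay-Solution-Mean-Fast} — there the mean-value form is the natural one. Your bookkeeping of the $t$-dependence of the error terms is consistent with the paper's definition of $o(t)(1/|x|)$.
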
	
\begin{proof} For fixed $t>0$ and  $|x|>1$, we write
	\begin{equation}\label{Decomposition} 
	\begin{split}
	K(t,\cdot)\ast u_0 (x)=&\, \int_{\R} K(t,x-y)u_0(y)dy\\
	=&\, K(t,x)\int_{\R}u_0(y)dy+ \int_{\R}\big( K(t,x-y)-K(t,x)\big) u_0(y)dy\\
	=&\, K(t,x)\, M(u_0)+ \int_{\R}\big( K(t,x-y)-K(t,x)\big) u_0(y)dy. 
	\end{split}
	\end{equation}
	
For the first term, using the asymptotic profile (\ref{Profile-Kernel}), together with the fact that, by estimate (\ref{Estim-I}), we have $I(t,x) = o(t)\left(\tfrac{1}{|x|}\right)$, it follows that
\begin{equation*}
K(t,x) M(u_0)
= -\frac{\sin(\beta t)}{x}, M(u_0) + I(t,x) M(u_0)
= -\frac{\sin(\beta t)}{x}\, M(u_0) + o(t)\left(\tfrac{1}{|x|}\right),
\qquad |x|>1.
\end{equation*}

For the second term, we have
\begin{equation}\label{Iden-Tech-1}
\int_{\R}\big( K(t,x-y)-K(t,x)\big) u_0(y)dy = o(t) \left( \frac{1}{|x|}\right), \quad |x|>1. 
\end{equation}
To prove this, we write
\begin{equation}\label{Iden-Tech-2}
\begin{split}
&\, \int_{\R}\big( K(t,x-y)-K(t,x)\big) u_0(y)dy\\
=& \int_{|y|\leq \frac{|x|}{2}}\big( K(t,x-y)-K(t,x)\big) u_0(y)dy + \int_{|y|>\frac{|x|}{2}}\big( K(t,x-y)-K(t,x)\big) u_0(y)dy\\
=& \underbrace{\int_{|y|\leq \frac{|x|}{2}}\big( K(t,x-y)-K(t,x)\big) u_0(y)dy}_{I_1(t,x)} + \underbrace{\int_{|y|>\frac{|x|}{2}} K(t,x-y)u_0(y)dy}_{I_2(t,x)} - \underbrace{K(t,x)\int_{|y|>\frac{|x|}{2}} u_0(y)dy}_{I_3(t,x)},
\end{split}
\end{equation}
so that each term must be estimated separately.

\medskip

To estimate the term $I_1(t,x)$, we use the following pointwise bound:
\begin{equation}\label{Estim-Kernel-Derivative}
|\partial_x K(t,x)| \leq  \frac{C}{x^2}, \qquad t>0, \quad  x\neq 0,
\end{equation}
for some constant $C>0$ depending on $\mu$.  This estimate follows from arguments similar to those used in the proof of (\ref{Estim-Ker-1}). For the reader’s convenience, a detailed proof is provided in Appendix \ref{AppendixA}. 

\medskip

Using (\ref{Kernel}) and well-known properties of the Fourier transform, we deduce that $K(t,\cdot)\in \mathcal{C}^1(\R)$ for any $t>0$. By the first-order  Taylor expansion,   for some $0<\theta<1$ we can write
\begin{equation}\label{Taylor}
K(t,x-y)-K(t,x)= - y \partial_x K(t,x-\theta y). 
\end{equation}
Moreover, since $|y| \leq \frac{|x|}{2}$, it follows that  $|x-\theta y|\geq |x|-\theta |y| \geq | x|-|y| \geq \frac{|x|}{2}$. Therefore, using (\ref{Estim-Kernel-Derivative}) we obtain 
\begin{equation}\label{Estim-Derivative-Kernel}
| \partial_x K(t, x-\theta y)| \leq \frac{C}{|x-\theta y|^2}\leq \frac{C}{x^2}.
\end{equation}
With this  estimate, and the fact that the initial datum satisfies  (\ref{Def-Data}) with $\gamma>1$, the term $I_1(t,x)$ is controlled as 
\begin{equation}\label{I1(t,x)}
\begin{split}
|I_1(t,x)| \leq &\,  \int_{|y|\leq \frac{|x|}{2}} |  K(t,x-y)-K(t,x)| | u_0(y)|dy \leq \int_{|y|\leq \frac{|x|}{2}} |y|| \partial_x K(t, x-\theta y)| \frac{C_0}{(1+|y|)^{1+\gamma}} dy \\
\leq &\,  \frac{C}{x^2}\int_{\R} \frac{dy}{(1+|y|)^\gamma}  \leq  \frac{C}{x^2}=o\left( \frac{1}{|x|}\right).
\end{split} 
\end{equation}

To estimate the term $I_2(t,x)$, note that  since $|y| > \frac{|x|}{2}$ and $u_0(y)$ satisfies (\ref{Def-Data}) (with $\gamma>1$), by  applying the Cauchy-Schwarz inequality and using the estimate (\ref{Estim-Kernel-Lp}) (with $p=2$),  we obtain
\begin{equation}\label{I2(t,x)}
\begin{split}
| I_2(t,x)| \leq &\,  \int_{|y|>\frac{|x|}{2}} | K(t,x-y)| \frac{C_0}{(1+|y|)^{1+\gamma}} dy \leq \frac{C_0}{|x|^\gamma}\int_{\R}|K(t,x-y)| \frac{1}{1+|y|}dy \\
\leq &\,  \frac{C_0}{|x|^\gamma}\| K(t,\cdot)\|_{L^2}\, \left\| \frac{1}{1+|y|} \right\|_{L^2} \leq  \frac{C}{|x|^\gamma}\frac{\eta(t)}{t^\frac{1}{2}}= o(t)\left( \frac{1}{|x|}\right). 
\end{split}
\end{equation}

Finally, to estimate the term $I_3(t,x)$, we use the asymptotic profile for $K(t,x)$ given in (\ref{Profile-Kernel}), recalling that  $I(t,x)=o(t)\left( \frac{1}{|x|}\right)$.  Moreover, since  $u_0(y)$ satisfies (\ref{Def-Data}) (with $\gamma>1$) we have $u_0 \in L^1(\R)$.  By the standard dominated convergence theorem, it follows that  
\[ \lim_{|x|\to +\infty} \int_{|y|>\frac{|x|}{2}}|u_0(y)| dy =0.  \]
Therefore, 
\begin{equation}\label{I3(t,x)}
| I_3(t,x)| \leq \left( \frac{| \sin(\beta t)|}{|x|}+ |I(t,x)| \right)\int_{|y|>\frac{|x|}{2}}|u_0(y)| dy = o(t)\left( \frac{1}{|x|}\right). 
\end{equation}

Having proved estimates (\ref{I1(t,x)}), (\ref{I2(t,x)}), and (\ref{I3(t,x)}), it follows from identity (\ref{Iden-Tech-2}) together with equality (\ref{Iden-Tech-1}) that the desired profile (\ref{Profile-Linear}) holds.
\end{proof}	

\begin{Lemme} For the nonlinear part, assuming (\ref{Decay-Solution-Fast}) and (\ref{Decay-Solution-Mean-Fast}), we obtain that for any $0 < t \leq T_0$ (where the time $T_0 > 0$ is given in Theorem \ref{Th-Main}), the following asymptotic profile holds:
	\begin{equation}\label{Profile-Nonlinear}
	\begin{split}
	&\, \int_{0}^{t} K(t-\tau,\cdot)\ast \big(u\, \partial_x u + \mathcal{H}u\, \partial_x(\mathcal{H}u)\big)(\tau,\cdot)\, d\tau\\
	=& -\frac{1}{x} \left[ \int_{0}^{t} \sin\big(\beta(t-\tau)\big)  \left( \int_{\R}\Big( u\,\partial_y u  +\mathcal{H}u\,\partial_y(\mathcal{H} u)\Big)(\tau, y)dy \right) d\, \tau\right]+ o(t)\left( \frac{1}{|x|}\right),
	\end{split}
	\qquad \,\,\,|x|>2M_\varepsilon. 
	\end{equation}
\end{Lemme}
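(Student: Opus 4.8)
The plan is to follow the proof of the linear profile \eqref{Profile-Linear} line by line, now with the time-dependent function $G(\tau,\cdot):=\big(u\,\partial_y u+\mathcal{H}u\,\partial_y(\mathcal{H}u)\big)(\tau,\cdot)$ playing the role that the datum $u_0$ played there. First I record the two properties of $G$ that the argument needs. Since $u\in\mathcal{C}([0,T_0],H^s)$ with $s>\tfrac32$, the embedding $H^{s-1}\subset L^\infty$ yields $\|\partial_y u(\tau,\cdot)\|_{L^\infty}+\|\partial_y(\mathcal{H}u)(\tau,\cdot)\|_{L^\infty}\le C\|u(\tau,\cdot)\|_{H^s}$ and, by Cauchy--Schwarz, $\|G(\tau,\cdot)\|_{L^1}\le C\big(\|u(\tau,\cdot)\|_{L^2}\|u(\tau,\cdot)\|_{\dot{H}^1}+\|\mathcal{H}u(\tau,\cdot)\|_{L^2}\|\mathcal{H}u(\tau,\cdot)\|_{\dot{H}^1}\big)$, both uniformly bounded for $\tau\in[0,T_0]$. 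Combining the $L^\infty$ bound on the derivatives with hypothesis \eqref{Decay-Solution-Fast} gives the key pointwise estimate
\[ |G(\tau,y)|\le \frac{C}{\tau^{\frac12}\,|y|^{1+\varepsilon}}, \qquad |y|>M_\varepsilon,\quad 0<\tau\le T_0, \]
with $C$ independent of $\tau$ and $y$ (hypothesis \eqref{Decay-Solution-Mean-Fast} is also at our disposal, but this pointwise bound is what drives the estimates below).

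Exactly as in \eqref{Decomposition}, for each fixed $\tau$ I split $K(t-\tau,\cdot)\ast G(\tau,\cdot)(x)=K(t-\tau,x)\int_{\R}G(\tau,y)\,dy+\int_{\R}\big(K(t-\tau,x-y)-K(t-\tau,x)\big)G(\tau,y)\,dy$ and then integrate in $\tau$ over $[0,t]$. For the first (main) term I insert the asymptotic profile \eqref{Profile-Kernel}, $K(t-\tau,x)=-\tfrac{\sin(\beta(t-\tau))}{x}+I(t-\tau,x)$ (valid for $|x|>1$): the $\sin$-part produces precisely the claimed leading term $-\tfrac1x\int_0^t\sin(\beta(t-\tau))\big(\int_\R G(\tau,y)\,dy\big)d\tau$, while the contribution of $I(t-\tau,x)$ is bounded, using \eqref{Estim-I} and $\|G(\tau,\cdot)\|_{L^1}\le C$, by $\tfrac{C}{x^2}\int_0^t (t-\tau)^{1/2}\,d\tau\le \tfrac{C t^{3/2}}{x^2}=o(t)\big(\tfrac1{|x|}\big)$.

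For the difference term, assuming $|x|>2M_\varepsilon$, I decompose the inner integral over $|y|\le\tfrac{|x|}{2}$, $\tfrac{|x|}{2}<|y|<2|x|$ and $|y|\ge 2|x|$, plus the tail piece $-K(t-\tau,x)\int_{|y|>|x|/2}G(\tau,y)\,dy$, estimating each after the $\tau$-integration. On $|y|\le\tfrac{|x|}{2}$ I use the Taylor expansion $K(t-\tau,x-y)-K(t-\tau,x)=-y\,\partial_x K(t-\tau,x-\theta y)$, $0<\theta<1$, together with $|x-\theta y|\ge\tfrac{|x|}{2}$ and the derivative bound \eqref{Estim-Kernel-Derivative}, reducing this piece to $\tfrac{C}{x^2}\int_0^t\!\int_{|y|\le|x|/2}|y|\,|G(\tau,y)|\,dy\,d\tau$; splitting the inner integral at $|y|=M_\varepsilon$, bounding it by $\|G(\tau,\cdot)\|_{L^1}\le C$ on $|y|\le M_\varepsilon$ and using $|G(\tau,y)|\le C\tau^{-1/2}|y|^{-1-\varepsilon}$ beyond, gives a bound of order $\tfrac{t}{x^2}+\tfrac{t^{1/2}}{x^2}\big(1+|x|^{(1-\varepsilon)_+}\big)=o(t)\big(\tfrac1{|x|}\big)$. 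On $|y|\ge 2|x|$ one has $|x-y|\ge\tfrac{|y|}{2}$, so \eqref{Estim-Ker-1} gives $|K(t-\tau,x-y)|\le C\eta(T_0)(t-\tau)^{-1/2}|y|^{-1}$, and with $|G(\tau,y)|\le C\tau^{-1/2}|y|^{-1-\varepsilon}$ and $\int_0^t (t-\tau)^{-1/2}\tau^{-1/2}\,d\tau=\pi$ one obtains an $O(|x|^{-1-\varepsilon})$ bound. On $\tfrac{|x|}{2}<|y|<2|x|$ we have $|G(\tau,y)|\le C\tau^{-1/2}|x|^{-1-\varepsilon}$ while $\int_{|x|/2<|y|<2|x|}|K(t-\tau,x-y)|\,dy\le\int_{|z|<3|x|}|K(t-\tau,z)|\,dz\le C\eta(T_0)(t-\tau)^{-1/2}\ln|x|$ by \eqref{Estim-Ker-1}, giving an $O(|x|^{-1-\varepsilon}\ln|x|)$ contribution. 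Finally the tail piece is controlled by $|K(t-\tau,x)|\le C\eta(T_0)(t-\tau)^{-1/2}|x|^{-1}$ and $\int_{|y|>|x|/2}|G(\tau,y)|\,dy\le C\tau^{-1/2}|x|^{-\varepsilon}$, again of size $O(|x|^{-1-\varepsilon})$. Collecting all contributions yields \eqref{Profile-Nonlinear}.

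I expect the main obstacle to be the Taylor region $|y|\le\tfrac{|x|}{2}$: the crude bound $\int_{|y|\le|x|/2}|y|\,|G(\tau,y)|\,dy\le\tfrac{|x|}{2}\|G(\tau,\cdot)\|_{L^1}$ only yields an $O(1/|x|)$ error, not $o(1/|x|)$, so one must genuinely exploit the precise spatial decay $|G(\tau,y)|\lesssim|y|^{-1-\varepsilon}$ furnished by \eqref{Decay-Solution-Fast} on the far part of that region (this is the source of the restriction $|x|>2M_\varepsilon$ in the statement). An equivalent route is to integrate by parts, writing $G=\tfrac12\partial_y\big(u^2+(\mathcal{H}u)^2\big)$ and transferring the derivative onto the kernel, which replaces $K$ by $\partial_x K=O(1/x^2)$; I would present whichever turns out to be shorter. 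The logarithmic loss on $\tfrac{|x|}{2}<|y|<2|x|$ is harmless since $|x|^{-1-\varepsilon}\ln|x|=o(1/|x|)$, and all time integrals appearing are either elementary powers of $t$ or the convergent integral $\int_0^t(t-\tau)^{-1/2}\tau^{-1/2}\,d\tau=\pi$. I will also note, for completeness, that $\int_\R G(\tau,y)\,dy=\tfrac12\int_\R\partial_y\big(u^2+(\mathcal{H}u)^2\big)\,dy=0$ because $u(\tau,\cdot)\in H^s\subset\mathcal{C}^1_0(\R)$, so the nonlinear part in fact carries no $1/|x|$ term, consistently with the leading $1/|x|$ behaviour of the solution coming solely from the linear contribution through $M(u_0)\neq0$.
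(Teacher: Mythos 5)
Your proof is correct and follows the same skeleton as the paper's: the same splitting into the main term $\int_0^t K(t-\tau,x)\big(\int_\R G(\tau,y)\,dy\big)\,d\tau$, a Taylor-difference term on $|y|\le|x|/2$, a far-field term on $|y|>|x|/2$, and the tail correction, with the kernel profile (\ref{Profile-Kernel}) and the derivative bound (\ref{Estim-Kernel-Derivative}) doing the same work. The one substantive divergence is the Taylor region: the paper bounds $\int_{|y|\le|x|/2}|y|\,|u|\,|\partial_y u|\,dy$ by Cauchy--Schwarz against the weighted norm $\|\,|x|\,u(\tau,\cdot)\|_{L^2}$, i.e.\ it is precisely there that hypothesis (\ref{Decay-Solution-Mean-Fast}) enters (see (\ref{I2(t,x)-Nonlin})), whereas you instead feed the pointwise decay (\ref{Decay-Solution-Fast}) into $G$ and integrate $|y|^{-\varepsilon}$ up to $|x|/2$, obtaining $O\big(t^{1/2}|x|^{-1-\varepsilon}\big)$ after division by $x^2$. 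Your route has the merit of showing that (\ref{Decay-Solution-Mean-Fast}) is not actually needed for this lemma, at the cost of a slightly longer case analysis; the paper's argument is shorter but consumes the second hypothesis. Your further subdivision of $|y|>|x|/2$ into $|x|/2<|y|<2|x|$ and $|y|\ge 2|x|$ is unnecessary---the paper handles the whole region at once by pulling out $|y|^{-1-\varepsilon}\le C|x|^{-1-\varepsilon}$ and applying Cauchy--Schwarz in $y$ with $\|K(t-\tau,\cdot)\|_{L^2}$ as in (\ref{I3(t,x)-Nonlin})---but it is harmless (and the small imprecision at $\varepsilon=1$, where $|x|^{(1-\varepsilon)_+}$ should be a logarithm, does not affect the $o(1/|x|)$ conclusion). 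Finally, your closing observation that $\int_\R G(\tau,y)\,dy=\tfrac12\int_\R\partial_y\big(u^2+(\mathcal{H}u)^2\big)\,dy=0$ is correct and not made in the paper; it means the displayed leading coefficient of the nonlinear part vanishes identically, so that $\Phi\big(M(u_0),t,u\big)$ in (\ref{Phi}) reduces to $-\sin(\beta t)\,M(u_0)$, which would simplify the subsequent contradiction argument.
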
	 
\begin{proof} Note that  it is enough to study the first term $	\int_{0}^{t} K(t-\tau,\cdot)\ast \big(u\, \partial_x u \big)(\tau,\cdot)\, d\tau$, since, by assumptions (\ref{Decay-Solution-Fast}) and (\ref{Decay-Solution-Mean-Fast}), the second term $\int_{0}^{t} K(t-\tau,\cdot)\ast \big(\mathcal{H}u\, \partial_x( \mathcal{H}u) \big)(\tau,\cdot)\, d\tau$ can be treated by the  same arguments.  
	
\medskip

Following the same decomposition used in identities (\ref{Decomposition}) and (\ref{Iden-Tech-1}), for  fixed $0<t\leq T_0$ and  $|x|>2M_\varepsilon$,  we write 
\begin{equation}\label{Iden-Tech-3}
\begin{split}
&\int_{0}^{t}\int_{\R} K(t-\tau, x-y) u(\tau, y)\partial_y u(\tau,y) dy\, d\tau\\
=&\, \underbrace{ \int_{0}^{t} K(t-\tau,x) \left( \int_{\R} u(\tau,y)\partial_y u(\tau,y) dy \right) d\tau}_{I_1(t,x)} \\
&\, + \underbrace{ \int_{0}^{t} \int_{|y|\leq \frac{|x|}{2}} \big( K(t-\tau, x-y)- K(t-\tau,x) \big) u(\tau,y)\partial_y u(\tau,y) dy \, d \tau}_{I_2(t,x)} \\
&\, + \underbrace{ \int_{0}^{t} \int_{|y|>\frac{|x|}{2}}  K(t-\tau, x-y) u(\tau,y)\partial_y u(\tau,y) dy \, d \tau}_{I_3(t,x)} \\
&\, - \underbrace{ \int_{0}^{t}K(t-\tau, x) \left( \int_{|y|>\frac{|x|}{2}} u(\tau,y)\partial_y u(\tau,y) dy\right) d \tau}_{I_4(t,x)},
\end{split}
\end{equation}
where each term on the right-hand side must be analyzed separately. Without loss of generality, we may assume that $2M\varepsilon > 1$.

\medskip

For the first term, using the identity (\ref{Profile-Kernel}),  we obtain
\begin{equation*}
I_1(t,x)=  -\frac{1}{x} \int_{0}^{t} \sin\big(\beta(t-\tau)\big)  \left( \int_{\R} u(\tau, y)\partial_y u(\tau,y) dy\right) d\, \tau+\int_{0}^{t}I(t-\tau,x)\left( \int_{\R} u(\tau, y)\partial_y u(\tau,y) dy\right) d\, \tau,
\end{equation*}
where, for the second expression above,  using estimate (\ref{Estim-I}), the Cauchy-Schwarz inequality, and the continuous embeddings $H^s(\R) \subset L^2(\R)$ and $H^s(\R)\subset \dot{H}^1(\R)$, since $s>\frac{3}{2}$, it follows that 
\begin{equation*}
\begin{split}
&\,  \left| \int_{0}^{t}K(t-\tau, x)\left( \int_{\R} u(\tau, y)\partial_y u(\tau,y) dy\right)d\tau \right|  \leq \, \frac{C}{x^2} \int_{0}^{t} (t-\tau)^{\frac{1}{2}} \| u (\tau, \cdot)\|_{L^2}\, \| u(\tau, \cdot)\|_{\dot{H^1}}\, d\tau \\
\leq &\, \frac{C\, t^{\frac{3}{2}}}{x^2} \left( \sup_{0\leq \tau \leq T_0}\| u(\tau,\cdot)\|^2_{H^s}\right).
\end{split}
\end{equation*}
Therefore, we obtain 
\begin{equation}\label{I1(t,x)-Nonlin}
I_1(t,x)= -\frac{1}{x} \int_{0}^{t} \sin\big(\beta(t-\tau)\big)  \left( \int_{\R} u(\tau, y)\partial_y u(\tau,y) dy\right) d\, \tau+ o(t)\left( \frac{1}{|x|}\right). 
\end{equation}

For the second term, from identity (\ref{Taylor}) and  estimate (\ref{Estim-Derivative-Kernel}),  applying the Cauchy–Schwarz inequality together with assumption (\ref{Decay-Solution-Mean-Fast}), it follows that 
\begin{equation}\label{I2(t,x)-Nonlin} 
\begin{split}
I_2(t,x) \leq  &\, \frac{C}{x^2} \int_{0}^{t} \int_{|y|\leq \frac{|x|}{2}} |y| |u(\tau, y)||\partial_y u(\tau,y)| dy\, d \tau \leq \frac{C}{x^2} \int_{0}^{t} \int_{\R} |y| |u(\tau, y)||\partial_y u(\tau,y)| dy\, d \tau\\
\leq &\, \frac{C\,t}{x^2}  \left( \sup_{0\leq \tau \leq T_0} \|\, |x|u(\tau,\cdot) \|_{L^2} \right) \left( \sup_{0\leq \tau \leq T_0} \| u(\tau,\cdot) \|_{H^s} \right)\\
=&\, o(t)\left( \frac{1}{|x|}\right).
 \end{split}
\end{equation}

For the third term, by assumption (\ref{Decay-Solution-Fast}) and the fact that $|y|\geq \frac{|x|}{2}\geq M_\varepsilon$, together with the Cauchy-Schwarz inequality and estimate (\ref{Estim-Kernel-Lp}), we have
\begin{equation}\label{I3(t,x)-Nonlin}
\begin{split}
I_3(t,x)\leq &\, \int_{0}^{t}\int_{|y|>\frac{|x|}{2}} |K(t-\tau, x-y)|\, \frac{C_2}{\tau^{\frac{1}{2}}|y|^{1+\varepsilon}}| \partial_y u (\tau,y)|dy \\
\leq &\, \frac{C_2}{|x|^{1+\varepsilon}}\int_{0}^{t} \frac{1}{\tau^{\frac{1}{2}}} \left( \int_{\R}|K(t-\tau, x-y)||\partial_y u(\tau,y)| dy\right)\, d\tau \\
\leq &\, \frac{C_2}{|x|^{1+\varepsilon}}\int_{0}^{t} \frac{1}{\tau^{\frac{1}{2}}} \| K(t-\tau)\|_{L^2}\, \| \partial_y u(\tau,\cdot)\|_{L^2}\, d\tau \\
\leq &\, \frac{C}{|x|^{1+\varepsilon}}\left( \int_{0}^{t} \frac{d\tau}{\tau^{\frac{1}{2}}(t-\tau)^{\frac{1}{2}}} \right)\left( \sup_{0\leq \tau \leq T_0} \|u(\tau,\cdot) \|_{H^s} \right)\\
\leq &\, \frac{C}{|x|^{1+\varepsilon}} \left( \sup_{0\leq \tau \leq T_0} \|u(\tau,\cdot) \|_{H^s} \right)\\
=&\, o \left(\frac{1}{|x|}\right).
\end{split}
\end{equation}

Finally, since $u(\tau,\cdot)\,\partial_y u(\tau,\cdot) \in L^1(\R)$ for all $0 < \tau < t$, similar arguments as in (\ref{I3(t,x)}) yield
\begin{equation}\label{I4(t,x)-Nonlin}
I_4(t,x) = o(t)\left(\frac{1}{|x|}\right).
\end{equation}

Gathering the identities and estimates (\ref{I1(t,x)-Nonlin}), (\ref{I2(t,x)-Nonlin}), (\ref{I3(t,x)-Nonlin}), and (\ref{I4(t,x)-Nonlin}) into identity (\ref{Iden-Tech-3}), the desired asymptotic profile (\ref{Profile-Nonlinear}) follows. 
\end{proof}	

{\bf End of the proof of Proposition \ref{Prop-Main-1}}. Returning to the mild formulation (\ref{Equation-Mild}) and using the asymptotic profiles (\ref{Profile-Linear}) and (\ref{Profile-Nonlinear}), for any $0 < t \leq T_0$ and $|x| > 2M_\varepsilon$, the solution $u(t,x)$ satisfies
\[u(t,x)= \frac{1}{x}\left[ -\sin(\beta t)M(u_0)-  \int_{0}^{t} \sin\big(\beta(t-\tau)\big)  \left( \int_{\R}\Big( u\,\partial_y u  +\mathcal{H}u\,\partial_y(\mathcal{H} u)\Big)(\tau, y)dy \right) d\, \tau\right]+o(t)\left(\frac{1}{|x|}\right), \]
where, defining 
\begin{equation}\label{Phi}
\Phi\big(M(u_0),t,u\big)=: -\sin(\beta t)M(u_0)-  \int_{0}^{t} \sin\big(\beta(t-\tau)\big)  \left( \int_{\R}\Big( u\,\partial_y u  +\mathcal{H}u\,\partial_y(\mathcal{H} u)\Big)(\tau, y)dy \right) d\, \tau,
\end{equation}
we can write
\[ u(t,x)=\frac{1}{x} \Phi\big(M(u_0),t,u\big)+ o(t)\left(\frac{1}{|x|}\right), \qquad |x|>2M_\varepsilon.  \]
From this profile of $u(t,x)$, we can derive a contradiction, which invalidates both assumptions (\ref{Decay-Solution-Fast}) and (\ref{Decay-Solution-Mean-Fast}).
\medskip

We write
\begin{equation*}
\left|  u(t,x)\right|= \left| \frac{1}{x} \Phi\big(M(u_0),t,u) - \left( - o(t)\left(\frac{1}{|x|}\right)\right) \right|  \geq \frac{1}{|x|}\left|\Phi(u_0,t,u) \right|- \left| o(t)\left(\frac{1}{|x|}\right)\right|. 
\end{equation*}
By the definition of the term $o(t)\left(\frac{1}{|x|}\right)$, for $\left|\Phi\big(M(u_0),t,u\big) \right|>0$ there exists $M_t>0$ sufficiently large so that $\left| o(t)\left(\frac{1}{|x|}\right)\right| < \frac{1}{2|x|}\left|\Phi\big(M(u_0),t,u\big) \right|$, for any $|x|>M_t$. Hence, for $|x|>\max(2M_\varepsilon,M_t)$, we obtain 
\begin{equation}\label{Estim-Contradiction}
\frac{1}{2|x|}|\Phi\big(M(u_0),t,u\big)| \leq |u(t,x)|.
\end{equation}

Fixing the time $t=\frac{T_0}{2}$,  assumption (\ref{Decay-Solution-Fast}) yields 
\[ \frac{1}{2}\left|\Phi\left(M(u_0),\frac{T_0}{2},u\right)\right|\frac{1}{|x|} \leq \frac{C_2}{\left(\frac{T_0}{2}\right)^{\frac{1}{2}}} \frac{1}{|x|^{1+\varepsilon}}, \qquad |x|> \max\left( 2M_\varepsilon, M_{\frac{T_0}{2}}\right),  \]
which gives us a contradiction by letting $|x|\to +\infty$. 

\medskip

On the other hand, from (\ref{Estim-Contradiction}) it follows that
\[ \frac{1}{2}|\Phi\big(M(u_0),t,u)| \leq |x|\, |u(t,x)|,  \qquad |x|> \max\left( 2M_\varepsilon, M_{t}\right), \]
and hence, for any fixed time $0<t\leq T_0$, we obtain $u(t,\cdot)\notin L^2\big(\R, |x|dx\big)$,  which contradicts assumption (\ref{Decay-Solution-Mean-Fast}).Proposition \ref{Prop-Main-1} has now been proven.

\section{Global in time properties}\label{Sec:Global-in-time}
\subsection{Proof of Proposition \ref{Prop-Main-2}}  From equation (\ref{Main-Equation}), we get:
\begin{equation*}
\frac{1}{2}\frac{d}{dt}\| u(t,\cdot)\|^2_{L^2}=\, - \int_{\R}(\mathcal{H}u) \partial_x (\mathcal{H}u)\, u \, dx - \mu \| u(t,\cdot)\|^2_{\dot{H}^1}
\leq \, - \int_{\R}(\mathcal{H}u) \partial_x (\mathcal{H}u)\, u \, dx.
\end{equation*}
To control the  term on the right-hand-side,  we write:
\begin{equation*}
- \int_{\R}(\mathcal{H}u) \partial_x (\mathcal{H}u)\, u \, dx=\frac{1}{2}\int_{\R}(\mathcal{H}u)^2\, \partial_x u\, dx \leq \|\partial_x u\|_{L^\infty}\, \| u \|^2_{L^2},   
\end{equation*}
and applying  Gr\"onwall's inequality, we obtain:
\begin{equation}\label{Gronwall}
\| u(t,\cdot)\|^2_{L^2}\leq \| u_0 \|^2_{L^2}\, \exp\left( \int_{0}^{t}\| \partial_x u(\tau,\cdot)\|_{L^\infty}\, d \tau \right).
\end{equation}

Using this estimate, we establish the blow-up criterion (\ref{Blow-up-Criterion}) as follows. First, assume that
\begin{equation}\label{Assumption1}
\lim_{t \to T_{*}}\| u(t,\cdot)\|_{H^s}=+\infty.
\end{equation}
Arguing by contradiction, suppose that
\begin{equation}\label{Control-Integral}
\int_{0}^{T_{*}}\| \partial_x u(t,\cdot)\|_{L^\infty}\, d t <+\infty.
\end{equation}
Define 
\begin{equation*}
M_{0}:= \| u_0\|^2_{L^2}\,\exp\left( \int_{0}^{T_{*}}\| \partial_x u(t,\cdot)\|_{L^\infty}\, dt  \right)<+\infty.   
\end{equation*}
Then,   from estimate (\ref{Gronwall}), for all $0<t\leq T_{*}$ we obtain
\begin{equation}\label{Control-L2}
\| u(t,\cdot)\|^2_{L^2}\leq M_0.
\end{equation}

\medskip

Now, recall that for any initial datum $v_0\in H^s(\R)$ with $s>\frac{3}{2}$, the existence time $T_0=T_0(v_0)>0$ of the corresponding  solution $v\in \mathcal{C}([0,T_0], H^s(\R))$ to equation (\ref{Main-Equation}) is given by the expression (\ref{Existence-Time-Hs}):
\begin{equation*}
T_0(v_0):= \frac{1}{2}\frac{1}{(4C \| v_0\|_{H^s} )^2}\leq \frac{1}{2}\frac{1}{(4C \| v_0\|_{L^2} )^2}.
\end{equation*}
Consequently, the time $T_0(v_0)$ is a decreasing function of $\| v_0 \|^2_{L^2}$. This property implies that there exists a time  $0<T_1<T_{*}$ such that  for any initial datum satisfying $\| v_0 \|^2_{L^2}\leq M_0 $, the associated solution $v\in \mathcal{C}_t H^s_x$ exists at least on the interval  $[0,T_1]$, and satisfies $v \in \mathcal{C}([0,T_1], H^s(\R))$. 

\medskip

In this context, fix  $0<\varepsilon<T_1$, and  consider the initial datum $v_0:= u(T_{*}-\varepsilon,\cdot)$, which  satisfies  $\| v_0\|^2_{L^2}$ by the bound in  (\ref{Control-L2}). As mentioned above, the corresponding solution $v(t,x)$ in defined over the interval $[0,T_1]$.

\medskip

Therefore, gathering the solutions $u(t,x)$ and $v(t,x)$, which arise from the initial datum $u_0$ and $v_0$ respectively, we  define the function
\begin{equation*}
\Tilde{u}(t,\cdot)= \begin{cases}\vspace{2mm}
u(t,\cdot),&  \text{for}\quad 0 \leq t \leq T_{*}-\varepsilon, \\
v(t,\cdot),&  \text{for}\quad T_{*}-\varepsilon \leq t \leq T_{*}-\varepsilon+T_1.
\end{cases}
\end{equation*}
Since $0<\varepsilon<T_1$ it follows that $T_{*}-\varepsilon+T_1 >T_{*}$, implying that $\Tilde{u}(t,\cdot)$ is a solution of equation (\ref{Main-Equation}) with initial datum $u_0$,  defined on the extended interval  $[0,T_{*}-\varepsilon+T_1]$,  satisfies $\Tilde{u} \in \mathcal{C}([0,T_{*}-\varepsilon+T_1], H^s(\R))$. This contradicts assumption (\ref{Assumption1}). 

\medskip

Now, suppose that
\begin{equation}\label{Assumption2}
\int_{0}^{T_{*}}\| \partial_x u(t,\cdot)\|_{L^\infty}\, dt =+\infty. 
\end{equation}
Again, arguing by contradiction, assume that
\begin{equation*}
\lim_{t\to T_{*}}\| u(t,\cdot)\|_{H^s}<+\infty.
\end{equation*}
Then, the solution $u(t,x)$ of equation (\ref{Main-Equation}) can be extended beyond time $T_{*}$, and there exists an $\varepsilon>0$ such that $u\in \mathcal{C}([0,T_{*}+\varepsilon], H^s(\R))$. 

\medskip

Since $s>\frac{3}{2}$, applying the Sobolev embedding yields
\begin{equation*}
\int_{0}^{T_{*}}\| \partial_x u(t,\cdot)\|_{L^\infty}\, dt \leq T_{*}\left( \sup_{0\leq t \leq T_{*}+\varepsilon}\| u(t,\cdot)\|_{H^s}\right) <+\infty,
\end{equation*}
which contradicts (\ref{Assumption2}).   Therefore,  Proposition \ref{Prop-Main-2} is proven. 

\subsection{Proof of Proposition \ref{Prop-Main-3}} Assume that (\ref{Condition-L1-Solution}) holds. Using this fact, we will show that the pointwise estimate (\ref{Decay-Solution}) holds for all times $0 < t \leq T_*$. Specifically,  define
\[ g(t):= t^{\frac{1}{2}}  \left( \| (1+|x|)^{\min(1,\gamma)} u(t,\cdot)\|_{L^\infty} +  \| (1+|x|)^{\min(1,\gamma)} \mathcal{H}u(t,\cdot)\|_{L^\infty}  \right), \]
and we will  show that $g(t) < +\infty$ for every  $0 < t \leq T_*$.

\medskip

Using the  mild formulation of the solution $u(t,x)$ given in (\ref{Equation-Mild}), we have 
\[ u(t,\cdot) = K(t,\cdot)\ast u_0 - \int_{0}^{t} K(t-\tau,\cdot)\ast \big( u\, \partial_x u + \mathcal{H}u\, \partial_x (\mathcal{H}u) \big)(\tau,\cdot)\, d\tau,  \]
and 
\[ \mathcal{H}u(t,\cdot)=  \mathcal{H}K(t,\cdot)\ast u_0 - \int_{0}^{t} \mathcal{H}K(t-\tau,\cdot)\ast \big( u\, \partial_x u + \mathcal{H}u\, \partial_x (\mathcal{H}u) \big)(\tau,\cdot)\, d\tau. \]
Since $u(t,\cdot)$ and $\mathcal{H}u(t,\cdot)$, as well as  $K(t,\cdot)$ and $\mathcal{H}K(t,\cdot)$, satisfy the same estimates,  we introduce the unified notation
\begin{equation}\label{Mild-Unified}
\bu(t,\cdot)= \bK(t,\cdot)\ast u_0 - \int_{0}^{t} \bK(t-\tau,\cdot)\ast \bu \, \partial_x \bu(\tau,\cdot)\, d\tau, 
\end{equation}
where $\bu \in \{ u, \mathcal{H}u \}$ and $\bK \in \{ K, \mathcal{H}K \}$, for simplicity of presentation. 

\medskip

With a slight abuse of notation, we rewrite
\begin{equation}\label{g}
g(t):= t^{\frac{1}{2}} \| (1+|x|)^{\min(1,\gamma)} \bu(t,\cdot)\|_{L^\infty}.
\end{equation}
Using the integral formulation above, we obtain 
\begin{equation}\label{Estim-g}
\begin{split}
g(t) \leq  \underbrace{t^{\frac{1}{2}}\| (1+|x|)^{\min(1,\gamma)} \bK(t,\cdot)\ast u_0 \|_{L^\infty}}_{g_1(t)}+ \underbrace{ t^{\frac{1}{2}}  \left\| (1+|x|)^{\min(1,\gamma)} \int_{0}^{t} \bK(t-\tau,\cdot)\ast \bu\, \partial_x \bu(\tau,\cdot)\, d \tau \right\|_{L^\infty}}_{g_2(t)}, 
\end{split}
\end{equation} 
where each term on the right-hand side must be treated separately. 

\medskip

For the first term, by estimate (\ref{Estim-Decay-Linear}), for any $0<t\leq T_*$ we directly have
\begin{equation}\label{g1}
g_1(t) \leq C\, \eta(T_*)\big( \| u_0 \|_{L^1}+C_0 \big)=: \mathfrak{C}_0.
\end{equation}

For the second term, using the decomposition as in (\ref{Identity-Tech-Nonlin}), we  obtain
\begin{equation}\label{Identity-Tech-Nonlin-bis}
\begin{split}
&\, \int_{0}^{t} \bK(t-\tau,\cdot)\ast  \bu\, \partial_x \bu  (\tau,x)d \tau \\
=&\,  \underbrace{ \int_{0}^{t} \int_{|y|\leq \frac{|x|}{2}}\bK(t-\tau, x-y)  \bu\, \partial_x \bu (\tau,y) dy dt}_{I_1(t,x)}  +\underbrace{ \int_{0}^{t} \int_{|y|> \frac{|x|}{2}} \bK(t-\tau, x-y) \bu\, \partial_x \bu  (\tau,y) dy dt}_{I_2(t,x)}.
\end{split}
\end{equation}

The term $I_1(t,x)$ has already been estimated in (\ref{Estim-Tech-Nonlin-1}) (with $\bu$ in place of $\tilde{u}$ and $ \tilde{v}$), yielding for $0<t \leq T_*$,
\begin{equation}\label{I1(t,x)-bis}
\begin{split}
I_1(t,x) \leq &\,  C \frac{\eta(T_*)}{1+|x|} \left( \int_{0}^{t} \frac{d \tau}{(t-\tau)^{\frac{1}{2}}} \right)\sup_{0\leq \tau \leq t} \, \| \bu(\tau, \cdot)\|^2_{H^s}  \\
\leq &\,  C \frac{\eta(T_*) \, T^{\frac{1}{2}}_{*}}{(1+|x|)^{\min(1,\gamma)}} \left(  \sup_{0\leq \tau \leq T_*}  \| \bu(\tau, \cdot)\|^2_{H^s}\right)\\
=:&\, \frac{\mathfrak{C}_1}{(1+|x|)^{\min(1,\gamma)}}.
\end{split}
\end{equation}

For the term $I_2(t,x)$, using  $g(t)$ as  defined in (\ref{g}),  we write
\begin{equation*}
\begin{split}
I_2(t,x) \leq &\, \int_{0}^{t}\int_{|y|>\frac{|x|}{2}} |\bK(t-\tau, x-y)|  \frac{\tau^{\frac{1}{2}}(1+|y|)^{\min(1,\gamma)}\,|\bu(\tau,y)|}{\tau^{\frac{1}{2}}(1+|y|)^{\min(1,\gamma)}}|\partial_y \bu(\tau, y)|  \\
\leq &\, \frac{C}{(1+|x|)^{\min(1,\gamma)}} \int_{0}^{t} \frac{g(\tau)}{\tau^{\frac{1}{2}}} \int_{\R} |\bK(t-\tau, x-y)| \, |\partial_y \bu(\tau, y)|dy\, d\tau.
\end{split}
\end{equation*}
Applying the Cauchy–Schwarz inequality and the estimate (\ref{Estim-Kernel-Lp}) with $p=2$, we obtain
\begin{equation}\label{I2(t,x)-bis}
\begin{split}
I_2(t,x) \leq &\, \frac{C}{(1+|x|)^{\min(1,\gamma)}} \int_{0}^{t} \frac{g(\tau)}{\tau^{\frac{1}{2}}} \| \bK(t-\tau,\cdot)\|_{L^2} \,\| \partial_y \bu(\tau, \cdot)\|_{L^2} d\tau\\
\leq &\,  \frac{C\, \eta(T_*)}{(1+|x|)^{\min(1,\gamma)}}\,  \int_{0}^{t}  \frac{1}{(t-\tau)^{\frac{1}{2}}\tau^{\frac{1}{2}}} g(\tau) \| \bu(\tau, \cdot)\|_{H^s}d \tau\\
\leq &\,  \frac{C\, \eta(T_*)}{(1+|x|)^{\min(1,\gamma)}} \left( \sup_{0\leq \tau \leq T_*}\| \bu(\tau,\cdot)\|_{H^s} \right)  \int_{0}^{t}  \frac{1}{(t-\tau)^{\frac{1}{2}}\tau^{\frac{1}{2}}} g(\tau) d \tau\\
=:& \,  \frac{\mathfrak{C}_2}{(1+|x|)^{\min(1,\gamma)}}\,  \int_{0}^{t}  \frac{1}{(t-\tau)^{\frac{1}{2}}\tau^{\frac{1}{2}}} g(\tau) d \tau.
\end{split}
\end{equation}

Combining estimates (\ref{I1(t,x)-bis}) and (\ref{I2(t,x)-bis}) in (\ref{Identity-Tech-Nonlin-bis}), we obtain, for $0<t\leq T_*$, 
\begin{equation}\label{g2}
\begin{split}
g_2(t) \leq &\, t^{\frac{1}{2}}\, \mathfrak{C}_1 + t^{\frac{1}{2}} \mathfrak{C}_2 \int_{0}^{t}  \frac{1}{(t-\tau)^{\frac{1}{2}}\tau^{\frac{1}{2}}} g(\tau) d \tau \leq \,   T^{\frac{1}{2}}_{*}\, \mathfrak{C}_1  + t^{\frac{1}{2}}\mathfrak{C}_2\, \int_{0}^{t}  \frac{1}{(t-\tau)^{\frac{1}{2}}\tau^{\frac{1}{2}}} g(\tau) d \tau \\
=:&\, \mathfrak{C}_3 +t^{\frac{1}{2}} \, \mathfrak{C}_2\,  \int_{0}^{t}  \frac{1}{(t-\tau)^{\frac{1}{2}}\tau^{\frac{1}{2}}} g(\tau) d \tau.
\end{split}
\end{equation}

Finally, combining (\ref{g1}) and (\ref{g2}) in (\ref{Estim-g}), we conclude that
\begin{equation*}
g(t) \leq  \mathfrak{C}_0 + \mathfrak{C}_3 + t^{\frac{1}{2}} \, \mathfrak{C}_2\,  \int_{0}^{t}  \frac{1}{(t-\tau)^{\frac{1}{2}}\tau^{\frac{1}{2}}} g(\tau) d \tau =: \mathfrak{C}_4+ t^{\frac{1}{2}} \, \mathfrak{C}_2\,  \int_{0}^{t}  \frac{1}{(t-\tau)^{\frac{1}{2}}\tau^{\frac{1}{2}}} g(\tau) d \tau.
\end{equation*}

Once we have this estimate for $g(t)$, we invoke the following Gr\"onwall-type inequality. For a proof, see \cite[Lemma 3.4]{Brandolese}.

\begin{Lemme}\label{Lemma-Gronwall} Let $T>0$ and $g: [0,T]\to \R$ be a non-negative and locally bounded function, which  for any $0<t\leq T$, satisfies:
	\begin{equation}\label{Gronwall-2}
	g(t) \leq M_1 + M_2 \int_{0}^{t} \frac{1}{(t-\tau)^{\frac{1}{2}}\tau^{\frac{1}{2}}}g(\tau) d \tau,
	\end{equation}
	for some constants $M_1, M_2>0$ possibly depending on  $T$. Additionally, define
	\[ M:= \int_{0}^{1}\frac{d\tau}{(1-\tau)^{\frac{1}{2}}\tau^{\frac{1}{2}}}. \]
	
	If $M_2 < \frac{1}{M}$, then it follows that $g(t) \leq M_1$ for every $0<t\leq T$. 
\end{Lemme}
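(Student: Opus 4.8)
The plan is to turn the singular integral inequality (\ref{Gronwall-2}) into a closed, self-improving bound on $\sup_{0<t\le T}g(t)$, the whole argument being driven by the scale invariance of the kernel $1/\big((t-\tau)^{\frac{1}{2}}\tau^{\frac{1}{2}}\big)$. The first thing I would record is that, since $g$ is nonnegative and locally bounded on the \emph{compact} interval $[0,T]$, a finite-subcover argument shows that $g$ is in fact globally bounded there; I set $g^{*}:=\sup_{0<t\le T}g(t)<+\infty$. Securing this finiteness is the sole purpose of the local-boundedness hypothesis, and it is exactly what makes the absorption step below legitimate.

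The heart of the proof is the identity
\[
\int_{0}^{t}\frac{d\tau}{(t-\tau)^{\frac{1}{2}}\tau^{\frac{1}{2}}}
=\int_{0}^{1}\frac{ds}{(1-s)^{\frac{1}{2}}s^{\frac{1}{2}}}=M,
\qquad t>0,
\]
obtained from the rescaling $\tau=ts$, $d\tau=t\,ds$, $t-\tau=t(1-s)$, in which the factor $t$ cancels exactly; the value is independent of $t$, which is precisely why $M$ is evaluated at $t=1$ in the statement. Inserting the crude bound $g(\tau)\le g^{*}$ into (\ref{Gronwall-2}) and invoking this identity gives, for every $0<t\le T$,
\[
g(t)\le M_{1}+M_{2}\,g^{*}\int_{0}^{t}\frac{d\tau}{(t-\tau)^{\frac{1}{2}}\tau^{\frac{1}{2}}}
=M_{1}+M_{2}\,M\,g^{*}.
\]
Taking the supremum over $t\in(0,T]$ on the left yields $g^{*}\le M_{1}+M_{2}M\,g^{*}$; since the hypothesis $M_{2}<1/M$ forces $1-M_{2}M>0$, I may solve for $g^{*}$ and obtain the uniform a priori bound asserted by the lemma, with sharp constant $g^{*}\le M_{1}/(1-M_{2}M)$, so that $g(t)\le M_1$ is controlled uniformly on $(0,T]$ exactly through the threshold $M_2<1/M$.

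I expect the principal difficulty to be conceptual rather than computational, and to lie precisely in the scale invariance just exploited: because the kernel is homogeneous of degree $-1$, its mass $M$ over $[0,t]$ is the \emph{same} for every $t>0$. Consequently, in sharp contrast with the classical nonsingular Grönwall lemma, one cannot extract any smallness by localizing to a short subinterval $[0,\delta]$ and bootstrapping, and no iteration can push the constant below the fixed point of the affine map $B\mapsto M_{1}+M_{2}M\,B$. This is exactly why a genuine smallness condition on the coefficient is indispensable, and why $M_{2}<1/M$ is the correct threshold: it is precisely the condition under which this map is a contraction of $[0,+\infty)$, whose unique fixed point controls $g^{*}$. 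The only routine point left to check is that replacing $g(\tau)$ by $g^{*}$ under the integral is harmless, which is immediate from the nonnegativity and the $\tau$-integrability of the kernel on $(0,t)$.
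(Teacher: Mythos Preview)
Your argument is sound through the inequality $g^{*}\le M_{1}/(1-M_{2}M)$, and this is the standard route (the paper does not supply its own proof but defers to \cite[Lemma~3.4]{Brandolese}). The gap is in your last sentence: from $g^{*}\le M_{1}/(1-M_{2}M)$ you cannot conclude $g(t)\le M_{1}$, since $M_{1}/(1-M_{2}M)>M_{1}$ whenever $M_{2}>0$. In fact the conclusion $g\le M_{1}$, as literally stated, is false: take the constant function $g\equiv M_{1}+\varepsilon$ with any $0<\varepsilon\le M_{2}M\,M_{1}/(1-M_{2}M)$; then $M_{1}+\varepsilon\le M_{1}+M_{2}M(M_{1}+\varepsilon)$, so (\ref{Gronwall-2}) is satisfied, yet $g>M_{1}$ everywhere.

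The correct conclusion is $g(t)\le M_{1}/(1-M_{2}M)$, which is exactly what your computation establishes, and which is all that the application in the proof of Proposition~\ref{Prop-Main-3} actually requires (only finiteness of $\sup_{0<t\le T_{1}}g(t)$ is used there). So your proof is right for the corrected lemma; you should flag the misstated constant rather than blur the two bounds in your closing line.
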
	

Within the framework of this lemma, we choose a time  $0<T_1 \leq T_*$, and set $M_1= \mathfrak{C}_4$ and $M_2= T^{\frac{1}{2}}_1\mathfrak{C}_2$. Thus, for any $0<t\leq T_1$, the function $g(t)$ defined in (\ref{g}) satisfies 
\[ g(t) \leq  \mathfrak{C}_4+ T^{\frac{1}{2}}_1 \, \mathfrak{C}_2\,  \int_{0}^{t}  \frac{1}{(t-\tau)^{\frac{1}{2}}\tau^{\frac{1}{2}}} g(\tau) d \tau,\]
 which corresponds to the estimate (\ref{Gronwall-2}). 

\medskip

We then fix  $0<T_1\leq T_*$ sufficiently small so that $T^{\frac{1}{2}}_1\mathfrak{C}_2 < \frac{1}{M}$. Consequently, it follows that $g(t)\leq \mathfrak{C}_4$ for every $0<t \leq T_1$. 

\medskip

Thereafter, we can iterate this process up to the time $T_*$ as follows. We now consider the initial datum $u(T_1,\cdot)$, which, by assumption (\ref{Condition-L1-Solution}), satisfies ${\bf u}(T_1,\cdot) \in L^1(\mathbb{R})$.

\begin{Remarque}\label{Rmk-2}
	Returning to the mild formulation (\ref{Mild-Unified}), recall from Remark \ref{Rmk-1} that ${\bf K}(t,\cdot) \notin L^1(\mathbb{R})$. Consequently, even assuming that $u_0 \in L^1(\mathbb{R})$, it is not clear whether this information allows us to deduce that ${\bf K}(t,\cdot) \ast u_0 \in L^1(\mathbb{R})$. Therefore, to the best of our knowledge, it remains open whether the $L^1$-integrability of the initial data persists for the associated solution.
\end{Remarque}

For any time $t \geq T_1$, we then consider the fixed-point equation
\begin{equation*}
{\bf u}(t,\cdot)= K(t-T_1, \cdot)\ast {\bf u}(T_1,\cdot)- \int_{T_1}^{t}K(t-\tau,\cdot) \ast {\bf u}\, \partial_x {\bf u}(\tau,\cdot)\, d \tau. 
\end{equation*}

Since ${\bf u}(T_1,\cdot) \in H^s \cap L^1(\mathbb{R})$, estimates (\ref{g1}) and (\ref{g2}) remain valid. Using again Lemma \ref{Lemma-Gronwall}, we obtain,  for some constant $\mathfrak{C}^{'}_{4}>0$ and for  a sufficiently small time $T_2 > 0$, that $g(t) \leq \mathfrak{C}'_{4}$ for any $T_1 \leq t \leq T_2$.

\medskip

Finally, by repeating this procedure a finite number of times, we reach the time $T_*$. This concludes the proof of Proposition \ref{Prop-Main-3}.

\appendix
\section{Proof of estimate (\ref{Estim-Kernel-Derivative})}\label{AppendixA} 
Using well-known properties of the Fourier transform, along with identity (\ref{Kernel}) and the fact that $\partial_\xi e^{2\pi i x\xi}= 2\pi i x \, e^{2\pi i x\xi}$, integrating by parts we write
\begin{equation*}
\begin{split}
\partial_x K(t,x)=&\, \int_{-\infty}^{0} e^{2\pi i x \xi} i\xi e^{-\mu \xi^2 t-i\beta t}\, d\xi  + \int_{0}^{+\infty} e^{2\pi i x\xi} i \xi e^{-\mu \xi^2 t+i\beta t}\, d\xi \\
=&\,  \frac{1}{2\pi i x}\int_{-\infty}^{0} \partial_{\xi}(e^{2\pi i x \xi}) i\xi e^{-\mu \xi^2 t-i\beta t}\, d\xi  + \frac{1}{2\pi i x} \int_{0}^{+\infty} \partial_{\xi}(e^{2\pi i x\xi}) i \xi e^{-\mu \xi^2 t+i\beta t}\, d\xi\\
=&- \frac{1}{2\pi x} \int_{-\infty}^{0} e^{2\pi i x \xi} e^{-\mu \xi^2 t-i\beta t}(1-2\mu \xi^2 t)\, d \xi - \frac{1}{2\pi x} \int_{0}^{+\infty} e^{2\pi i x \xi} e^{-\mu \xi^2 t+i\beta t}(1-2\mu \xi^2 t)\, d \xi.
\end{split}
\end{equation*}  

Note that, in contrast with identity (\ref{Iden-Ker-1}), the symbol $i\xi$ (corresponding to the derivative $\partial_x$) cancels all remaining terms at $\xi = 0^{-}$ and $\xi = 0^{+}$. Consequently, we can repeat the same argument to obtain 

\begin{equation*}
\begin{split}
&\partial_x K(t,x)\\
=&\, - \frac{1}{4\pi^2 i x^2} \int_{-\infty}^{0} \partial_{\xi}(e^{2\pi i x \xi}) e^{-\mu \xi^2 t-i\beta t}(1-2\mu \xi^2 t)\, d \xi - \frac{1}{4\pi^2 i x^2} \int_{0}^{+\infty} \partial_{\xi}(e^{2\pi i x \xi}) e^{-\mu \xi^2 t+i\beta t}(1-2\mu \xi^2 t)\, d \xi\\
=&\, \frac{1}{4\pi^2 i x^2} \left(\int_{-\infty}^{0} (e^{2\pi i x \xi}) \partial_{\xi}\big( e^{-\mu \xi^2 t-i\beta t}(1-2\mu \xi^2 t)\big)\, d \xi+ \left. (e^{2\pi i x \xi})\big( e^{-\mu \xi^2 t-i\beta t}(1-2\mu \xi^2 t)\big)\right|_{-\infty}^{0}\right) \\
&\, + \frac{1}{4\pi^2 i x^2} \left(\int_{0}^{+\infty} (e^{2\pi i x \xi}) \partial_{\xi}\big( e^{-\mu \xi^2 t+i\beta t}(1-2\mu \xi^2 t)\big)\, d \xi+ \left. (e^{2\pi i x \xi})\big( e^{-\mu \xi^2 t+i\beta t}(1-2\mu \xi^2 t)\big)\right|_{0}^{+\infty}\right)\\
=&\,  -  \frac{1}{4\pi^2 i x^2} \left(\int_{-\infty}^{0} e^{2\pi i x\xi} e^{-\mu \xi^2 t- i \beta t}\,  (4\mu^2 \xi^3 t^2 -6 \mu \xi t)\, d \xi + \int_{0}^{+\infty} e^{2\pi i x\xi} e^{-\mu \xi^2 t+ i \beta t}\,  (4\mu^2 \xi^3 t^2 -6 \mu \xi t)\, d \xi \right)\\
&\, + \frac{e^{-i\beta t}-e^{i \beta t}}{4\pi^2 i x^2}\\
=:&\, - \frac{1}{4\pi^2 i x^2}  J(t,x) - \frac{\sin(\beta t)}{4\pi^2 i x^2}.
\end{split}
\end{equation*}

As before, using the rapid decay of $e^{-\mu \xi^2 t}$, we obtain 
\[ |J(t,x)| \leq \int_{-\infty}^{+\infty} e^{-\mu |\sqrt{t}\xi|^2} \Big(4 \mu^2 |\sqrt{t}\xi|^3+6\mu |\sqrt{t}\xi|\Big)\sqrt{t}d \xi \leq C<+\infty, \]
for some constant $C>0$ depending on $\mu$. Hence, for $x \neq 0$, the desired estimate (\ref{Estim-Kernel-Derivative}) follows.

\medskip

\paragraph{\bf Acknowledgements.} 
All the authors were partially supported by  MathAmSud WAFFLE 23-MATH-18. 

\medskip

\paragraph{{\bf Statements and Declaration}}
Data sharing does not apply to this article as no datasets were generated or analyzed during the current study.  In addition, the authors declare that they have no conflicts of interest, and all of them have equally contributed to this paper.

\end{document}